\newlength{\defbaselineskip}
\newcounter{marnote}
\newcommand{\setlinespacing}[1]%
           {\setlength{\baselineskip}{#1 \defbaselineskip}}
\theoremstyle{plain}
\newtheorem{theorem}{Theorem}[section]
\newtheorem{corollary}[theorem]{Corollary}
\newtheorem{lemma}[theorem]{Lemma}
\theoremstyle{definition}
\newtheorem{definition}{Definition}[section]
\theoremstyle{remark}
\newtheorem{remark}{Remark}[section]
\numberwithin{equation}{section}
\begin{document}

\title{Pointwise A Priori Estimates for Solutions to Some p-Laplacian Equations}

\author{Xiaoqiang Sun\footnote{\ School of Mathematics, Sun Yat-sen University, Guangzhou, 510275, China.}\, $^{\boxtimes}$ \qquad Jiguang Bao\footnote{\ School of Mathematical Sciences, Beijing Normal University, Laboratory of Mathematics and Complex Systems, Ministry of Education, Beijing 100875, China.
\par \ Emails: sunxq6@mail.sysu.edu.cn, jgbao@bnu.edu.cn
\par \ $\boxtimes$ Corresponding author}
 }

\date{}

\maketitle

\begin{abstract}
In this paper, we apply blow-up analysis to study pointwise a priori estimates for some p-Laplace equations based on Liouville type theorems. With newly developed analysis techniques, we first extend the classical results of interior gradient estimates for the harmonic function to that for the p-harmonic function, i.e., the solution of $\Delta_{p}u=0,\  x\in\Omega$. We then obtain singularity and decay estimates of the sign-changing  solution of Lane-Emden-Fowler type p-Laplace equation $-\Delta_{p}u=|u|^{\lambda-1}u, \ x\in\Omega$, which are then extended to the equation with general right hand term $f(x,u)$ with certain asymptotic properties. In addition, pointwise estimates for higher order derivatives of the solution to Lane-Emden type p-Laplace equation, in a case of $p=2$, are also discussed.
\end{abstract}

\section{Introduction}
p-Laplacian equation arises in many studies of non-linear phenomena, for instance, in mathematical modeling of non-Newtonian fluids in physics \cite{ref11,ref16}, and in the theory of quasi-regular, quasi-conformal mappings in geometry \cite{ref13,ref14,ref15}. The p-Laplacian equation represents an important type of quasi-linear equations. It has the following form
\begin{equation}\label{m-f}
-\Delta_{p}u=f(x,u),\qquad x\in\Omega
\end{equation}
where $\Omega$ is a
domain in $\mathbb{R}^{N}$, and
$$
\Delta_{p}u:=div(|\nabla u|^{p-2}\nabla u),\qquad 1<p<\infty
$$
is p-Laplace operator. When $p=2$, $\Delta_{p}u$ is Laplace operator. $\Delta_{p}u$ is degenerate elliptic when $p>2$ and has singularity when
$p<2$.  

 We call $u\in C^{1}(\Omega)$ as a weak solution of \eqref{m-f}, if
\begin{equation}\label{weaksf}
\int_{\Omega}|\nabla u|^{p-2}(\nabla
u,\nabla\varphi)dx=\int_{\Omega}f(x,u)\varphi dx,
\qquad\forall\varphi\in C_{c}^{\infty}(\Omega).
\end{equation}

In this paper we assume $f\in C^{1}(\mathbb{R}^{N})$. In the following text, a weak solution always refers to that defined above with $ C^{1,\alpha}_{loc}$ regularity.

\begin{remark}
Although weak solutions in other weaker forms could be defined as well, for example $u\in W^{1,m}_{loc}(\Omega)\cap L^{\infty}_{loc}(\Omega)$ \cite{ref11}, or $C^{1,\alpha}_{loc}$ weak solution \cite{ref6}, however, we can deduce that these definitions are equivalent to each other due to the following established conclusion on regularity \cite{ref3,ref7,ref11, ref12}: \emph{If $u\in C^{1}$ is a weak solution of \eqref{m-f}, then there exists $\beta\in(0,1)$ such that $u\in W^{2,2}_{loc}(\Omega) \cap C^{1,\beta}_{loc}(\Omega).$
}
\end{remark}

When $f$=0, Eq. \eqref{m-f} becomes 
\begin{equation}\label{p-harmonic}
-\Delta_{p}u=0,\qquad x\in\Omega.
\end{equation}
The solution $u$ of \eqref{p-harmonic} is called p-harmonic function. In the case of $p=2$, it is well-known harmonic function, satisfying the following Laplace equation
\begin{equation}\label{harmonic}
-\Delta u=0,\qquad x\in\Omega.
\end{equation}
For harmonic function, the following pointwise interior estimate of its gradient \cite{ref5} can be derived via Poisson integrative formula or mean vale theorem:
\begin{equation}\label{est-har}
|\nabla u(x)| \leq N\cdot\sup_{\Omega}|u|\cdot
dist^{-1}(x,\partial\Omega),\qquad  \forall x \in \Omega.
\end{equation}
A natural question is that whether the above gradient estimate for harmonic function can be extended to that for p-harmonic function? We will seek to answer it in this paper. 

On the other hand, when $f$ has the form of $u^{\lambda}$, then Eq. \eqref{m-f} becomes the following Lane-Emden type equation 
\begin{equation}\label{Lane-mden}
-\Delta_{p}u=u^{\lambda},\qquad x\in\Omega.
\end{equation}
J. Serrin and H. Zou \cite{ref11} proved the Liouville type theorems of \eqref{Lane-mden}, i.e. nonexistence of the nonnegative nontrivial solution on the whole space $\mathbb{R}^{N}$. Based on the Liouville type theorems, P. Polacik, P. Quittner and P. Souplet \cite{ref9} obtained the following singularity and decay estimates for the nonnegative weak solution of \eqref{Lane-mden} by applying the method of blow-up analysis:

\emph{Let $0<p-1<\lambda<\Lambda_{s}(p),$ and let $\Omega \neq \mathbb{R}^{N}$ be a domain,
then there exists a constant $C=C(p,N,\lambda)>0$ $($independent of $\Omega$ and $u), $
such that any $($ nonnegative weak $)$ solution $u$ of \eqref{Lane-mden} satisfies
\begin{equation}\label{guji2-1}
u(x)+|\nabla u(x)|^{\frac{p}{\lambda+1}}\leq C dist^{-\frac{p}{\lambda+1-p}}(x,\partial\Omega) ,\qquad \forall x \in \Omega
\end{equation}}
where,
\begin{equation}
\Lambda_{s}(p):=
\begin{cases}
\frac{N(p-1)+p}{n-p},\qquad\qquad if N>p\\
\infty, \qquad \qquad \qquad if N\leq p
\end{cases}
\end{equation}
is the corresponding Sobolev critical exponent (\cite{ref1,ref2}).

Again, we are curious about that whether the decay estimates \eqref{guji2-1} could be obtained for the following Lane-Emden-Fowler type p-Laplacian equation \cite{ref7}
\begin{equation}\label{bianhaoR}
-\Delta_{p}u=|u|^{\lambda-1}u, \qquad x \in \Omega.
\end{equation}

To answer the above two questions, we here develop new analytic techniques to get pointwise a priori estimates for the solution and/or gradient of some types of quasilinear equations by coupling blow up analysis with Liouville-type theorems. In this paper, we first extend the interior estimates for the gradient of harmonic function to p-harmonic function. Subsequently, we study the singularity and decay estimates of the sign-changing  solutions of the Lane-Emden-Fowler type p-Laplacian equation, $-\Delta_{p}u=|u|^{\lambda-1}u$. Furthermore, we also discuss the pointwise a priori estimates for higher order derivatives of solution to $-\Delta u=u^{\lambda},x\in\Omega$, i.e., the Lane-Emden type p-Laplacian equation in the case of $p=2$. Different from the estimates in the form of $L^{p}$ norm or $C^{\alpha}$ seminorm, the pointwise a priori estimates deduced in this paper have some interesting features: (1) $dist^{-1}(x,\partial\Omega)$ is explicitly expressed in the right hand of the estimates. (2) The constant $C$ in the right hand of the estimates depends only on the exponents in the equation ($p$, $N$ and/or $\lambda$), and is independent of domain $\Omega$ and solution $u$. In this sense the pointwise estimates are also universal.

\section{Main Results}
\subsection{Interior estimates of gradient of p-harmonic function}
\begin{theorem}\label{thm1}
Let $p\in(1,\infty)$, and let $\Omega$ be a bounded domain in $\mathbb{R}^{N}$ $(N\geq2)$. Then there exists a constant $C=C(p,N)>0$, such that any $C^{1}$ weak solution $u$ of
$$\Delta_{p}u=0,\qquad x\in\Omega,$$
satisfies
\begin{equation}\label{eqn1-1}
|\nabla u(x)|\leq C \sup_{\Omega}|u|\cdot dist^{-1}(x,\partial\Omega) ,\qquad \forall x \in \Omega.
\end{equation}
\end{theorem}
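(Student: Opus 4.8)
The plan is to argue by contradiction using a blow-up (rescaling) argument, exactly the paradigm advertised in the introduction. Suppose the estimate \eqref{eqn1-1} fails for every constant; then for each $k\in\mathbb{N}$ there is a bounded domain $\Omega_k$ and a $C^1$ weak solution $u_k$ of $\Delta_p u_k=0$ in $\Omega_k$ together with a point $y_k\in\Omega_k$ such that
\[
|\nabla u_k(y_k)|>k\,\sup_{\Omega_k}|u_k|\cdot \mathrm{dist}^{-1}(y_k,\partial\Omega_k).
\]
Since we are free to normalize, I would first reduce to $\sup_{\Omega_k}|u_k|\le 1$ (the equation is homogeneous under scalar multiplication, so divide $u_k$ by its sup). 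The standard device is the ``doubling'' lemma of Polacik--Quittner--Souplet: introduce the weighted quantity $M_k(x)=|\nabla u_k(x)|\,\mathrm{dist}(x,\partial\Omega_k)$, which blows up somewhere, and use the doubling lemma to select a new point $x_k$ where $M_k(x_k)\ge k$ and where $M_k$ does not increase by more than a factor of $2$ on a ball of radius comparable to $M_k(x_k)/|\nabla u_k(x_k)|$ around $x_k$. This is the key step that lets one forget about the geometry of $\partial\Omega_k$ and work on large balls.

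Next I would rescale. Set $\lambda_k=|\nabla u_k(x_k)|^{-1}\to 0$ (since $|\nabla u_k|$ is large there, as $\sup|u_k|\le1$ and $M_k(x_k)\to\infty$), and define
\[
v_k(z)=\frac{u_k(x_k+\lambda_k z)-u_k(x_k)}{\lambda_k}.
\]
Because the $p$-Laplacian is invariant under this affine rescaling, each $v_k$ is again a $C^1$ weak solution of $\Delta_p v_k=0$, now on a ball $B_{R_k}(0)$ with $R_k\to\infty$, and by construction $|\nabla v_k(0)|=1$ while the doubling control gives $|\nabla v_k(z)|\le 2$ (say) on that ball, and $v_k(0)=0$. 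Now invoke the interior $C^{1,\beta}$ regularity quoted in the Remark (the DiBenedetto/Tolksdorf estimates): uniform gradient bounds plus the equation yield uniform $C^{1,\beta}_{loc}$ bounds for $v_k$, hence along a subsequence $v_k\to v$ in $C^1_{loc}(\mathbb{R}^N)$, where $v$ is an entire $p$-harmonic function with $|\nabla v|\le 2$ everywhere and $|\nabla v(0)|=1$. The final step is a Liouville-type theorem: an entire $p$-harmonic function on $\mathbb{R}^N$ with bounded gradient must be affine, so $|\nabla v|$ is constant; that is consistent with $|\nabla v(0)|=1$, so this alone is not yet a contradiction --- I would instead track more. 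The cleaner route is to also use $\sup|v_k|$: since $\sup_{\Omega_k}|u_k|\le1$, one has $|v_k(z)|\le 2/\lambda_k$, which is a weak bound, so instead I should exploit that $\mathrm{osc}$ of $v_k$ over any fixed ball is controlled by the gradient bound, giving $v$ with $|\nabla v|\le 2$; then the genuine contradiction comes from a \emph{Harnack-type / Liouville} statement that a globally bounded-gradient $p$-harmonic function must in fact be constant when paired with the additional normalization one gets from $M_k$ --- more precisely, by choosing the blow-up point via $M_k$ one forces $|\nabla v(0)|=1$ but also that $v$ arises as a limit of solutions whose sup is controlled relative to the rescaling, forcing $v$ to be bounded, and a bounded entire $p$-harmonic function is constant, contradicting $|\nabla v(0)|=1$.

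The main obstacle, and the place the argument must be executed carefully, is ensuring the blow-up limit $v$ is genuinely \emph{bounded} (not merely of bounded gradient), because only then does the $p$-Laplacian Liouville theorem --- a bounded entire $p$-harmonic function on $\mathbb{R}^N$ is constant --- deliver the contradiction with $|\nabla v(0)|=1$. This requires choosing the right quantity to apply the doubling lemma to: rather than $|\nabla u_k|\,\mathrm{dist}$, one should track $N_k(x):=\big(\sup_{\Omega_k}|u_k|\big)^{-1}|\nabla u_k(x)|\,\mathrm{dist}(x,\partial\Omega_k)$ or, equivalently, run the whole scheme so that after rescaling the function values as well as the gradients remain uniformly controlled on balls of radius $R_k\to\infty$. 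The remaining ingredients --- the doubling lemma, the scale-invariance of $\Delta_p$, the uniform interior $C^{1,\beta}$ estimates (available by the Remark), and the Liouville theorem for bounded $p$-harmonic functions (which follows from the Harnack inequality for $p$-harmonic functions, or from the gradient estimate on balls letting the radius tend to infinity) --- are all standard and assemble routinely once the normalization is set up correctly.
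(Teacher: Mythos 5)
Your overall strategy --- contradiction, the Polacik--Quittner--Souplet doubling lemma, a blow-up rescaling, interior $C^{1,\beta}$ compactness, and a Liouville theorem for entire $p$-harmonic functions --- is exactly the paradigm the paper follows, and the plan is sound. However, your rescaling formula is wrong, and that error is what generates all the downstream confusion about bounded gradient versus bounded function. With $\lambda_k=|\nabla u_k(x_k)|^{-1}$ and
\[
v_k(z)=\frac{u_k(x_k+\lambda_k z)-u_k(x_k)}{\lambda_k},
\]
the chain rule gives $\nabla v_k(z)=\nabla u_k(x_k+\lambda_k z)$, so $|\nabla v_k(0)|=|\nabla u_k(x_k)|=\lambda_k^{-1}\to\infty$, not $1$, and the doubling control gives only $|\nabla v_k|\le 2\lambda_k^{-1}$, also unbounded. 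The correct rescaling here is simply $v_k(z)=u_k(x_k+\lambda_k z)$, with no shift and no division: then $\nabla v_k(z)=\lambda_k\nabla u_k(x_k+\lambda_k z)$, so $|\nabla v_k(0)|=1$ and $|\nabla v_k|\le 2$ on $B_k(0)$, and -- after the preliminary reductions $u_k\ge 0$ (replace $u_k$ by $u_k+\sup|u_k|$) and $\sup_{\Omega_k}u_k=1$ -- also $0\le v_k\le 1$. The boundedness you were worried about is then automatic; the fix is the rescaling formula, not the choice of quantity fed to the doubling lemma (after normalizing, your $N_k$ coincides with $|\nabla u_k|\,\mathrm{dist}$). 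You also need $v\ge 0$ to invoke the paper's Lemma \ref{Liouville0}, so the nonnegativity reduction is not optional; and be careful that deducing Liouville from "the gradient estimate on balls letting $R\to\infty$" would be circular since that is essentially what is being proved -- use the Harnack-based argument or simply cite the lemma.

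Once corrected, your argument is actually more direct than the paper's. The paper does not blow up \eqref{eqn1-1} itself: it first proves the relaxed bound $|\nabla u|\le C\sup|u|\cdot\mathrm{diam}^{\epsilon}(\Omega)\cdot\mathrm{dist}^{-(1+\epsilon)}(x,\partial\Omega)$ with $C$ independent of $\epsilon$, applying the doubling lemma to $M_k=|\nabla u_k|^{1/(1+\epsilon_k)}$, using the anisotropic rescaling $v_k(y)=k^{\epsilon_k}\lambda_k^{\epsilon_k}u_k(x_k+k^{-\epsilon_k}\lambda_k y)$, and the normalization $\sup u_k\cdot\mathrm{diam}^{\epsilon_k}(\Omega_k)=1$ to squeeze out $|v_k|\le(\tfrac12)^{\epsilon_k}\le 1$; then it lets $\epsilon\to 0$. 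Your version with $\epsilon=0$, which yields $|v_k|\le 1$ directly from $\sup u_k=1$, reaches the same contradiction via Lemma \ref{Liouville0} ($v$ constant yet $|\nabla v(0)|=1$) without the relaxation that the paper flags as a "critical step." So the substance of your plan is correct and even streamlines the paper's proof; the submitted write-up just needs the rescaling fixed and the nonnegativity/boundedness bookkeeping made explicit.
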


\begin{remark}
This result extends the classical interior estimates \eqref{est-har} of gradient of the harmonic function \cite{ref5}. 
\end{remark}

\subsection{Singularity estimates for Lane-Emden-Fowler type equation }
For Lane-Emden-Fowler type equation
\begin{equation}\label{bianhao}
-\Delta_{p}u=|u|^{\lambda-1}u, \qquad x\in\Omega,
\end{equation}
according to \eqref{weaksf}, we say $u\in C^{1}(\Omega)$ is a weak solution of \eqref{bianhao}, if
\begin{equation}\label{weaks}
\int_{\Omega}|\nabla u|^{p-2}(\nabla u,\nabla \varphi)dx=\int_{\Omega}|u|^{\lambda-1}u\varphi dx,\qquad \forall\varphi\in C_{c}^{\infty}(\Omega).
\end{equation}
A weak solution is called "stable" if the second variation of energy function of \eqref{bianhao} is nonnegative. More specifically, we have the following definition \cite{ref7}. 
\begin{definition}\label{Defstable}
Denote
$$
L_{u}(v,\varphi)=\int_{\Omega}|\nabla u|^{p-2}(\nabla v,\nabla \varphi)dx+\int_{\Omega}(p-2)|\nabla u|^{p-4}(\nabla u,\nabla v)\cdot(\nabla v,\nabla \varphi)-\lambda|u|^{\lambda-1}v\varphi dx.
$$
The weak solution $u\in C^{1}$ of \eqref{bianhao} is called stable, if
\begin{equation}\label{Lus1}
L_{u}(\varphi,\varphi)\geq0, \qquad \forall\varphi\in C_{c}^{1}(\Omega).
\end{equation}
Accordingly, the weak solution $u\in C^{1}$ of \eqref{bianhao} is called stable outside a compact set $K\subset\Omega$, if
\begin{equation}\label{Lus2}
L_{u}(\varphi,\varphi)\geq0, \qquad \forall\varphi\in C_{c}^{1}(\Omega \setminus K).
\end{equation}
\end{definition}
\begin{remark}
If $u$ has finite Morse index, then $u$ is stable outside a compact set.
\end{remark}
\begin{theorem}\label{thm2}
Let $p>2$, and let $\Omega\neq\mathbb{R}^{N}$ be an arbitrary domain of $\mathbb{R}^{N}$. If
\begin{equation}\label{index2}
\begin{cases}
p-1<\lambda<\infty,  \qquad\qquad \ \ if\ N\leq\frac{p(p+3)}{p-1},\\
p-1<\lambda<\Lambda_{c}(N,p), \qquad if\ N>\frac{p(p+3)}{p-1},\\
\end{cases}
\end{equation}
then there exists $C=C(p,N,\lambda)>0$ $($independent of $\Omega$ and $u$ $)$, 
such that any stable weak solution $u\in C^{1}(\Omega)$ of \eqref{bianhao} satisfies
\begin{equation}  \label{guji1}
|u(x)|+|\nabla u(x)|^{\frac{p}{\lambda+1}}\leq C dist^{-\frac{p}{\lambda+1-p}}(x,\partial\Omega) ,\qquad \forall x \in \Omega.
\end{equation}
\end{theorem}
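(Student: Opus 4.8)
The plan is to argue by contradiction using a doubling/blow-up argument in the spirit of Pol\'a\v{c}ik--Quittner--Souplet, combined with the Liouville-type theorem for stable solutions of the $p$-Laplace Lane--Emden equation on $\mathbb{R}^N$ (which holds precisely in the range \eqref{index2}). Concretely, introduce the quantity
\begin{equation}\label{Mdef}
M(x) := |u(x)|^{\frac{\lambda+1-p}{p}} + |\nabla u(x)|^{\frac{\lambda+1-p}{\lambda+1}},
\end{equation}
which has the homogeneity matching the scaling of \eqref{bianhao}, and suppose the conclusion \eqref{guji1} fails. Then there are domains $\Omega_k$, stable solutions $u_k$, and points $y_k\in\Omega_k$ with
$$
M_k(y_k)\, \mathrm{dist}(y_k,\partial\Omega_k) \longrightarrow \infty.
$$
First I would apply the Pol\'a\v{c}ik--Quittner--Souplet doubling lemma to the function $M_k$ on $\Omega_k$: this produces new points $x_k$ with $M_k(x_k)\ge M_k(y_k)$, with $M_k(x_k)\,\mathrm{dist}(x_k,\partial\Omega_k)\to\infty$, and crucially with $M_k$ controlled, $M_k(z)\le 2M_k(x_k)$, on the ball $B(x_k, k/M_k(x_k))$. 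This is the device that lets one work on balls whose radius (in rescaled coordinates) tends to infinity while keeping a uniform bound on the rescaled solution and its gradient.

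Next I would rescale: set $\lambda_k := M_k(x_k)^{-1}\to 0$ and define
$$
v_k(y) := \lambda_k^{\frac{p}{\lambda+1-p}}\, u_k\!\left(x_k + \lambda_k y\right), \qquad |y| < k.
$$
A direct computation shows $v_k$ is again a weak solution of $-\Delta_p v_k = |v_k|^{\lambda-1}v_k$ on $B_k(0)$, that stability is scale-invariant so each $v_k$ is stable there, and that by construction $|v_k(0)|^{\frac{\lambda+1-p}{p}}+|\nabla v_k(0)|^{\frac{\lambda+1-p}{\lambda+1}}=1$ while the analogous quantity is $\le 2$ on all of $B_k(0)$; in particular $\|v_k\|_{L^\infty(B_R)}$ and $\|\nabla v_k\|_{L^\infty(B_R)}$ are bounded uniformly in $k$ for every fixed $R$. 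Using the interior $C^{1,\beta}_{loc}$ regularity theory for the $p$-Laplacian (DiBenedetto--Tolksdorf, quoted in the Remark following \eqref{weaksf}), together with the $W^{2,2}_{loc}$ bound, I would extract a subsequence converging in $C^1_{loc}(\mathbb{R}^N)$ to a function $v_\infty$ which is a stable $C^1$ weak solution of $-\Delta_p v_\infty = |v_\infty|^{\lambda-1}v_\infty$ on all of $\mathbb{R}^N$, with $|v_\infty(0)|^{\frac{\lambda+1-p}{p}}+|\nabla v_\infty(0)|^{\frac{\lambda+1-p}{\lambda+1}}=1$, hence $v_\infty\not\equiv 0$. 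This contradicts the Liouville-type theorem for stable solutions in the range \eqref{index2}, finishing the proof. Finally, once $M(x)\,\mathrm{dist}(x,\partial\Omega)\le C$ is established, unwinding the definition \eqref{Mdef} of $M$ yields $|u(x)|\le C\,\mathrm{dist}^{-\frac{p}{\lambda+1-p}}(x,\partial\Omega)$ and $|\nabla u(x)|\le C\,\mathrm{dist}^{-\frac{\lambda+1}{\lambda+1-p}}(x,\partial\Omega)$, which is exactly \eqref{guji1} after raising the gradient term to the power $\frac{p}{\lambda+1}$.

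The main obstacle I expect is the passage to the limit and the justification that $v_\infty$ is genuinely a (stable, $C^1$) weak solution on $\mathbb{R}^N$ rather than merely a limit in some weak sense. Two delicate points arise: (i) because the $p$-Laplacian is degenerate/singular, $C^1$ convergence of $v_k$ does not automatically give convergence of $|\nabla v_k|^{p-2}\nabla v_k$ in a way that passes through the weak formulation — one must use the $C^{1,\beta}_{loc}$ estimates with a uniform exponent $\beta$ and uniform bound, which requires knowing the rescaled equations have uniformly controlled data, and then argue that the limit satisfies \eqref{weaks}; (ii) preserving stability in the limit requires passing to the limit in the quadratic form $L_{u_k}$, where the coefficient $|\nabla u_k|^{p-2}$ and the anisotropic term $|\nabla u_k|^{p-4}(\nabla u_k\otimes\nabla u_k)$ must converge appropriately on the set $\{\nabla v_\infty\neq 0\}$, with a separate (Fatou-type) argument to handle possible degeneracy on $\{\nabla v_\infty = 0\}$ — this is where the hypothesis $p>2$ is used to keep the coefficients bounded. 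A secondary technical point is checking that the doubling lemma applies, i.e. that $M_k$ is continuous (which follows from $u_k\in C^1$) and that the "$\mathrm{dist}$ to boundary" bookkeeping is consistent; this is routine but must be done carefully to get a constant $C$ depending only on $p,N,\lambda$.
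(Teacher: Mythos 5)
Your proposal follows essentially the same route as the paper: contradiction via the Pol\'a\v{c}ik--Quittner--Souplet doubling lemma applied to the quantity $M=|u|^{1/\alpha}+|\nabla u|^{1/(\alpha+1)}$ with $\alpha=\frac{p}{\lambda+1-p}$ (your exponents $\frac{\lambda+1-p}{p}$ and $\frac{\lambda+1-p}{\lambda+1}$ are exactly $1/\alpha$ and $1/(\alpha+1)$), the rescaling $v_k(y)=\lambda_k^\alpha u_k(x_k+\lambda_k y)$, uniform $C^{1,\beta}_{loc}$ estimates, passage to a limit that is a stable entire solution, and the Liouville theorem (Lemma~\ref{Liouville1}) to force $v\equiv 0$, contradicting the normalization at the origin. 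The technical point you flag about preserving stability under $C^1_{loc}$ convergence is precisely what the paper isolates as a separate lemma (Lemma~\ref{stable}), so your outline and the paper's proof match step for step.
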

\begin{remark}
This theorem expands the estimates of singularity and decay for nonnegative solution of Lane-Emden type equation in \cite{ref9} to that for sign-changing  solution of Lane-Emden-Fowler type equation, under the condition of "stability" of the weak solution.
\end{remark}

\begin{theorem}\label{thm3}
Let $p>2$, and let $\Omega\neq\mathbb{R}^{N}$ be an arbitrary domain of $\mathbb{R}^{N}$, if
\begin{equation}\label{index3}
\begin{cases}
p-1<\lambda<\infty,  \qquad\qquad  \ if N\leq p,\\
p-1<\lambda<\frac{N(p-1)+p}{N-p} ,\qquad if N>p,\\
\end{cases}
\end{equation}
then there exists $C=C(p,N,\lambda)>0$ (independent of $\Omega$ and $u$),
such that any  weak solution $u\in C^{1}(\Omega)$ of \eqref{bianhao} which is stable outside a compact set $K\subset\mathbb{R}^{N}$  satisfies
\begin{equation}\label{guji2}
|u(x)|+|\nabla u(x)|^{\frac{p}{\lambda+1}}\leq C dist^{-\frac{p}{\lambda+1-p}}(x,\partial\Omega) ,\qquad \forall x \in \Omega.
\end{equation}
\end{theorem}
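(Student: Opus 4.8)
The plan is to argue by contradiction via the standard blow-up (rescaling) argument of Polacik--Quittner--Souplet, adapted to the sign-changing $p$-Laplacian setting and using the Liouville-type theorem for solutions that are stable outside a compact set. Define the function
$$
M(x) := \bigl(|u(x)|^{\frac{\lambda+1-p}{p}} + |\nabla u(x)|^{\frac{\lambda+1-p}{\lambda+1}}\bigr)\, \mathrm{dist}(x,\partial\Omega),
$$
so that the claimed estimate \eqref{guji2} is equivalent to $M$ being bounded on $\Omega$ by a constant $C=C(p,N,\lambda)$. Suppose not: then for every $k$ there is a domain $\Omega_k \neq \mathbb{R}^N$ and a weak solution $u_k$ of \eqref{bianhao} on $\Omega_k$, stable outside a compact set, with a point $y_k \in \Omega_k$ at which $M_k(y_k) \geq 2k$ (the functions $M_k$ being defined as above with $u$ replaced by $u_k$).

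First I would apply the classical doubling lemma (Lemma 5.1 in \cite{ref9}) to the function $M_k$: this produces a new point $x_k \in \Omega_k$ with $M_k(x_k) \geq M_k(y_k) \geq 2k$ and such that $M_k \leq 2 M_k(x_k)$ on the ball $B(x_k, k\,\lambda_k^{-1})$, where $\lambda_k := |u_k(x_k)|^{\frac{\lambda+1-p}{p}} + |\nabla u_k(x_k)|^{\frac{\lambda+1-p}{\lambda+1}} \to \infty$. Next I rescale: set
$$
v_k(y) := \lambda_k^{-\frac{p}{\lambda+1-p}}\, u_k\!\bigl(x_k + \lambda_k^{-1} y\bigr), \qquad |y| < k,
$$
which is chosen precisely so that the equation \eqref{bianhao} is scale-invariant, i.e. $-\Delta_p v_k = |v_k|^{\lambda-1} v_k$ on $B(0,k)$, and so that the normalization gives $|v_k(0)|^{\frac{\lambda+1-p}{p}} + |\nabla v_k(0)|^{\frac{\lambda+1-p}{\lambda+1}} = 1$ together with a uniform bound $|v_k|^{\frac{\lambda+1-p}{p}} + |\nabla v_k|^{\frac{\lambda+1-p}{\lambda+1}} \leq 2$ on $B(0,k)$ coming from the doubling inequality. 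One must also check that stability outside a compact set is preserved under this rescaling — the second-variation form $L_u$ transforms covariantly under the scaling, so $v_k$ is again stable outside a compact set (and as $k\to\infty$ the compact set is pushed to infinity, so the limit will be stable on all of $\mathbb{R}^N$, or on a half-space).

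Then I would invoke interior $C^{1,\beta}_{loc}$ regularity for the $p$-Laplacian (the result quoted in the Remark after \eqref{weaksf}, from \cite{ref3,ref7,ref11,ref12}), with estimates uniform in $k$ because the right-hand sides $|v_k|^{\lambda-1}v_k$ are uniformly bounded on compact sets. This gives, after passing to a subsequence, convergence $v_k \to v$ in $C^1_{loc}(\mathbb{R}^N)$ (or in $C^1_{loc}$ of a half-space, in the boundary-blow-up case, where one gets $v \equiv 0$ there by the Dirichlet condition, contradicting $|v(0)|^{\frac{\lambda+1-p}{p}}+|\nabla v(0)|^{\frac{\lambda+1-p}{\lambda+1}}=1$). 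The limit $v$ is a weak solution of $-\Delta_p v = |v|^{\lambda-1} v$ on $\mathbb{R}^N$, it is bounded with bounded gradient, it is stable outside a compact set (in fact stable, since the compact set escapes to infinity), and it satisfies $|v(0)|^{\frac{\lambda+1-p}{p}} + |\nabla v(0)|^{\frac{\lambda+1-p}{\lambda+1}} = 1$, so $v \not\equiv 0$. But the Liouville-type theorem for the Lane-Emden-Fowler $p$-Laplace equation for solutions stable outside a compact set (the companion result in \cite{ref7}, valid precisely in the subcritical range \eqref{index3}) asserts that the only such solution is $v \equiv 0$ — a contradiction. Therefore $M$ is bounded, which is \eqref{guji2}.

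The main obstacle I expect is the passage to the limit in the degenerate quasilinear term: one needs the $C^{1}_{loc}$ compactness to be genuinely uniform in $k$ (the constant $\beta$ and the $C^{1,\beta}$ bound must not degenerate), and one needs to know that the limit $v$ is itself a \emph{weak} solution with $C^1$ regularity in the sense used by the Liouville theorem, so that the Liouville theorem actually applies — this requires care because $|\nabla v_k|$ could a priori concentrate or vanish in the limit, affecting ellipticity. A secondary delicate point is verifying that the stability inequality \eqref{Lus2} passes to the limit: since $L_{u}(\varphi,\varphi)$ involves $|\nabla u|^{p-2}$ and $|\nabla u|^{p-4}(\nabla u,\nabla\varphi)^2$, one needs the $C^1_{loc}$ convergence (not merely weak convergence) to pass these quadratic-in-$\varphi$ forms to the limit against a fixed test function $\varphi$, and one must track that the compact set $K_k$ outside which $v_k$ is stable satisfies $\mathrm{dist}(0,K_k)\to\infty$ so that every fixed $\varphi\in C^1_c(\mathbb{R}^N)$ is eventually admissible.
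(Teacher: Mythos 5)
Your proposal reproduces the paper's argument almost verbatim: the paper proves Theorem \ref{thm2} by doubling-lemma blow-up with the rescaling $v_k(y)=\lambda_k^{\alpha}u_k(x_k+\lambda_k y)$, $\alpha=p/(\lambda+1-p)$ (your exponents match, up to the inverted convention for $\lambda_k$), passes to a $C^1_{loc}$ limit via uniform $C^{1,\beta}_{loc}$ bounds, transports stability to the limit via Lemma \ref{stable}, and invokes the Liouville theorem; for Theorem \ref{thm3} the paper simply says to repeat this with Lemma \ref{Liouville2} in place of Lemma \ref{Liouville1}, which is exactly what you do. One small superfluity: the half-space boundary-blow-up case you mention in parentheses never arises, because the doubling lemma's conclusion $M_k(x_k)\,\mathrm{dist}(x_k,\partial\Omega_k)>2k$ already forces $\overline{B}(x_k,kM_k^{-1}(x_k))\subset\Omega_k$, so the blow-up is always interior (and there is no Dirichlet boundary condition in the hypotheses anyway). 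The subtlety you flag about the rescaled compact sets $\lambda_k^{-1}(K_k-x_k)$ is a real one — Lemma \ref{stable} as stated requires a \emph{fixed} $K$, and nothing in the doubling lemma forces these rescaled sets to stay bounded or to escape to infinity, so your parenthetical assertion that "the compact set escapes to infinity" is not justified — but the paper's own proof of Theorem \ref{thm3} is a one-line reference to the Theorem \ref{thm2} machinery and leaves this point equally unaddressed, so your proposal matches the paper's level of rigor and approach.
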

\noindent
The conclusions in the above two theorems can be extended for the following equation:
\begin{equation}\label{bianhaof}
-\Delta_{p}u=f(u), \qquad x\in\Omega.
\end{equation}
\begin{corollary}\label{pf1}
Assume that the relations between $N,p,\lambda$ are the same as that in Theorem \ref{thm2}. Let $\Omega\neq\mathbb{R}^{N}$ be an arbitrary domain of $\mathbb{R}^{N}$, and let $f$ satisfy
\begin{equation}
\lim_{u\rightarrow\infty}|u|^{1-\lambda}u^{-1}f(u)=l\in(0,+\infty).
\end{equation}
If $u\in C^{1}(\Omega)$ is a weak solution of \eqref{bianhaof} and satisfies \eqref{Lus1},
then there exists $C=C(p,N,\lambda,f)>0$ (independent of $\Omega$ and $u$), such that
\begin{equation}\label{guji3}
|u(x)|+|\nabla u(x)|^{\frac{p}{\lambda+1}}\leq C(1+ dist^{-\frac{p}{\lambda+1-p}}(x,\partial\Omega)) ,\qquad \forall x \in \Omega.
\end{equation}
\end{corollary}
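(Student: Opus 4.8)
The plan is to deduce Corollary \ref{pf1} from Theorem \ref{thm2} by a contradiction-plus-rescaling argument, exactly mirroring the blow-up scheme that underlies the theorem itself. Suppose the conclusion fails; then there exist domains $\Omega_k \neq \mathbb{R}^N$, weak solutions $u_k \in C^1(\Omega_k)$ of $-\Delta_p u_k = f(u_k)$ satisfying the stability inequality \eqref{Lus1}, and points $x_k \in \Omega_k$ with
\begin{equation*}
M_k := |u_k(x_k)| + |\nabla u_k(x_k)|^{\frac{p}{\lambda+1}} > k\,\bigl(1 + d_k^{-\frac{p}{\lambda+1-p}}\bigr), \qquad d_k := \operatorname{dist}(x_k,\partial\Omega_k).
\end{equation*}
Introduce the function $\mathcal{M}_k(x) := |u_k(x)| + |\nabla u_k(x)|^{\frac{p}{\lambda+1}}$, and apply the standard doubling lemma (Polacik--Quittner--Souplet) to replace $x_k$ by a point $y_k$ where $\mathcal{M}_k$ is comparably large and does not vary too fast on a ball of radius $\sim \mathcal{M}_k(y_k)^{-(\lambda+1-p)/p}$ around $y_k$; this radius shrinks relative to $d_k$ because $M_k d_k^{p/(\lambda+1-p)} \to \infty$.

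Next I would rescale. Set $\mu_k := \mathcal{M}_k(y_k)^{-(\lambda+1-p)/p} \to 0$ and define
\begin{equation*}
v_k(z) := \mu_k^{\frac{p}{\lambda+1-p}}\, u_k\!\left(y_k + \mu_k z\right).
\end{equation*}
A direct computation shows $-\Delta_p v_k = \mu_k^{\frac{p\lambda}{\lambda+1-p}} f\!\bigl(\mu_k^{-\frac{p}{\lambda+1-p}} v_k\bigr)$ on an expanding sequence of balls, that $|v_k(0)| + |\nabla v_k(0)|^{p/(\lambda+1)} = 1$, and (via the doubling lemma) that $\mathcal{M}[v_k]$ stays bounded on compact sets. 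The asymptotic hypothesis $\lim_{u\to\infty} |u|^{1-\lambda} u^{-1} f(u) = l \in (0,\infty)$ is what makes the right-hand side converge: where $|u_k| \to \infty$ the nonlinearity behaves like $l|u|^{\lambda-1}u$, so $\mu_k^{p\lambda/(\lambda+1-p)} f(\mu_k^{-p/(\lambda+1-p)} v_k) \to l\,|v|^{\lambda-1}v$; where $u_k$ stays bounded the factor $\mu_k^{p\lambda/(\lambda+1-p)} \to 0$ kills the term. Using the interior $C^{1,\beta}_{loc}$ estimates for the $p$-Laplacian (cited in the Remark after \eqref{weaksf}) together with this uniform bound, one extracts a subsequence $v_k \to v$ in $C^1_{loc}$, where $v$ is a weak solution of $-\Delta_p v = l\,|v|^{\lambda-1}v$ on all of $\mathbb{R}^N$ (or on a half-space, in the boundary-blow-up case, forcing $v \equiv 0$ by a reflection/boundary Liouville argument). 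After the harmless scaling $w := l^{1/(\lambda+1-p)} v$, $w$ solves \eqref{bianhao} on $\mathbb{R}^N$.

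Then I would check that $w$ inherits stability: passing \eqref{Lus1} to the limit under the rescaling — the second-variation form $L_{u_k}$ transforms with exactly the scaling weights that make it invariant, up to the same $f' $-versus-$\lambda|u|^{\lambda-1}$ comparison, which again follows from the differentiated form of the hypothesis (or, more robustly, from an approximation argument that only uses the limiting relation, since one may instead invoke that finite-energy-type cutoff test functions pass to the limit). Thus $w$ is a nonzero stable weak solution of \eqref{bianhao} on $\mathbb{R}^N$ in the range \eqref{index2}, contradicting the Liouville-type nonexistence that is packaged inside Theorem \ref{thm2} (equivalently, Theorem \ref{thm2} applied with $\Omega = \mathbb{R}^N \setminus \{0\}$ and letting the distance to the boundary tend to infinity forces $w \equiv 0$). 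This contradiction proves \eqref{guji3}. The main obstacle is the passage of the stability inequality to the limit when $f$ is only asymptotically a power: one must argue that the error between $f'(u_k)$ (or $\lambda |u_k|^{\lambda-1}$ scaled) and $\lambda l |u|^{\lambda-1}$ is negligible in $L^1_{loc}$ against the fixed test function $\varphi$, handling separately the region where $|u_k|$ is large (where the asymptotics give control) and where it is bounded (where the scaling factor vanishes); this requires a mild uniform-integrability argument rather than pointwise convergence alone, and is the only place where the proof genuinely goes beyond the pure-power case of Theorem \ref{thm2}.
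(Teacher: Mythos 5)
Your argument is the same blow-up-plus-doubling-plus-Liouville strategy the paper uses: assume failure, apply the Polacik--Quittner--Souplet doubling lemma, rescale with $\lambda_k = M_k^{-1}(x_k)$, observe that $1 + dist^{-1}$ in place of $dist^{-1}$ forces $\lambda_k \to 0$, pass to a $C^1_{loc}$ limit $v$ solving $-\Delta_p v = l\,|v|^{\lambda-1}v$ on $\mathbb{R}^N$ by the asymptotic hypothesis on $f$, check stability passes to the limit, and invoke Lemma \ref{Liouville1} to derive a contradiction. Your explicit split into the region where $|u_k|\to\infty$ (asymptotics of $f$ give the power) and the region where $u_k$ stays bounded (the prefactor $\lambda_k^{\alpha\lambda}\to 0$ kills the term), and the normalization $w=l^{1/(\lambda+1-p)}v$, are exactly the computations the paper leaves implicit, so this is the same route, filled in more carefully.

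Two small remarks. First, the half-space/boundary-blow-up alternative you mention never actually arises here: the doubling lemma guarantees $dist(x_k,\partial\Omega_k) \geq 2k\lambda_k$, so the rescaled domains are balls $B_k(0)$ exhausting $\mathbb{R}^N$; you can drop that branch. Second, your worry about comparing $f'(u_k)$ with $\lambda|u_k|^{\lambda-1}$ in the stability form is natural but slightly misdirected: the corollary's hypothesis literally asks that $u$ satisfy \eqref{Lus1}, i.e.\ nonnegativity of the \emph{fixed} quadratic form $L_u$ with the $-\lambda|u|^{\lambda-1}\varphi^2$ term (not $-f'(u)\varphi^2$), so no differentiated asymptotic information about $f$ is required. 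What does need to be checked — and what the paper glosses over by citing Lemma \ref{stable}, which is stated only for solutions of the pure-power equation \eqref{bianhao} — is that the rescaled stability inequality for $v_k$ (which solves $-\Delta_p v_k = f_k(v_k)$, not \eqref{bianhao}) survives the $C^1_{loc}$ limit. Your uniform-integrability remark is the right way to patch this, and it is a genuine improvement in rigor over the paper's one-line reference.
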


\begin{corollary}\label{pf2}
Assume that the relations between $N,p,\lambda$ are  the same as that in Theorem \ref{thm3}. Let $\Omega\neq\mathbb{R}^{N}$ be an arbitrary domain of $\mathbb{R}^{N}$. Assume that $f$ satisfies
\begin{equation}
\lim_{u\rightarrow\infty}|u|^{1-\lambda}u^{-1}f(u)=l\in(0,+\infty).
\end{equation}
If $u\in C^{1}(\Omega)$ is a weak solution of \eqref{bianhaof} and satisfies \eqref{Lus2},
then there exists $C=C(p,N,\lambda,f)>0$ $($independent of $\Omega$ and $u)$, such that
\begin{equation}\label{guji4}
|u(x)|+|\nabla u(x)|^{\frac{p}{\lambda+1}}\leq C(1+ dist^{-\frac{p}{\lambda+1-p}}(x,\partial\Omega)) ,\qquad \forall x \in \Omega.
\end{equation}
\end{corollary}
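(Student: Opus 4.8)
The plan is to reduce Corollary \ref{pf2} to Theorem \ref{thm3} by a perturbation/compactness argument, treating the general nonlinearity $f(u)$ as an asymptotic perturbation of the model nonlinearity $|u|^{\lambda-1}u$. The hypothesis $\lim_{u\to\infty}|u|^{1-\lambda}u^{-1}f(u)=l\in(0,\infty)$ (and the analogous behaviour as $u\to-\infty$, which the limit with $u^{-1}$ encodes) says that $f(u)=l|u|^{\lambda-1}u+o(|u|^{\lambda})$ for large $|u|$, and on bounded sets $f$ is simply bounded. So the two regimes — large solution values, where the equation behaves like the Lane--Emden--Fowler model, and bounded solution values, where the right-hand side is uniformly bounded — have to be handled separately and then combined into the single estimate \eqref{guji4}, which is why a harmless additive constant $1$ appears on the right.

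First I would argue by contradiction in the blow-up framework, exactly as in the proof of Theorem \ref{thm3}. Suppose \eqref{guji4} fails; then there are domains $\Omega_k$, weak solutions $u_k$ of $-\Delta_p u_k=f(u_k)$ on $\Omega_k$ that are stable outside a compact set, and points $x_k\in\Omega_k$ with
\[
M_k:=\bigl(|u_k(x_k)|+|\nabla u_k(x_k)|^{\frac{p}{\lambda+1}}\bigr)^{\frac{\lambda+1-p}{p}}\,dist(x_k,\partial\Omega_k)\longrightarrow\infty .
\]
A doubling-lemma argument (as in \cite{ref9}) lets me assume the quantity $|u_k|+|\nabla u_k|^{p/(\lambda+1)}$ is, up to a bounded factor, maximized near $x_k$ on balls of radius comparable to its own reciprocal power. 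Rescale: set $v_k(y)=\mu_k^{\frac{p}{\lambda+1-p}}u_k(x_k+\mu_k y)$ with $\mu_k$ chosen so that $|v_k(0)|+|\nabla v_k(0)|^{p/(\lambda+1)}=1$; since $M_k\to\infty$, the rescaled domains exhaust $\mathbb R^N$. The rescaled equation is $-\Delta_p v_k=\mu_k^{\frac{p\lambda}{\lambda+1-p}}f\bigl(\mu_k^{-\frac{p}{\lambda+1-p}}v_k\bigr)$. Here the two cases split: if the $u_k(x_k)$ are unbounded along a subsequence then $\mu_k\to 0$ and the argument of $f$ blows up where $v_k$ is bounded away from $0$, so by the asymptotic hypothesis the right-hand side converges (locally uniformly, using $C^{1,\alpha}_{loc}$ estimates) to $l|v|^{\lambda-1}v$; if instead the $u_k(x_k)$ stay bounded, then necessarily $|\nabla u_k(x_k)|\to\infty$, $\mu_k$ stays bounded, and after passing to a limit one obtains a nonconstant entire $p$-harmonic-type limit — contradicting the gradient bound of Theorem \ref{thm1} (or the corresponding Liouville statement). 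In the first case, the stability inequality \eqref{Lus2} rescales to the stability inequality for the limit equation because $\lambda|u_k|^{\lambda-1}$ rescales to $l\lambda|v|^{\lambda-1}$ with the same homogeneity; so $v$ is a nontrivial weak solution of $-\Delta_p v=l|v|^{\lambda-1}v$ on $\mathbb R^N$ that is stable (the compact set $K_k$ escapes to infinity after rescaling). A trivial scaling $v\mapsto l^{1/(\lambda+1-p)}v$ turns this into a solution of the normalized equation \eqref{bianhao}, and the Liouville-type nonexistence theorem underlying Theorem \ref{thm3} gives $v\equiv 0$, contradicting $|v(0)|+|\nabla v(0)|^{p/(\lambda+1)}=1$.

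The passage from the contradiction to the stated estimate \eqref{guji4} then follows the standard final step: the failure of a \emph{pointwise} bound of the indicated homogeneity is exactly what the doubling lemma converts into the blow-up sequence above, and the constant $C$ produced depends only on $N,p,\lambda$ and on the limit $l$ and the rate in the asymptotic hypothesis, hence on $f$ but not on $\Omega$ or $u$; the $1+(\cdot)$ form absorbs the bounded-solution regime where no negative power of the distance is available. I would also note that the only place the specific range \eqref{index3} of $\lambda$ is used is in invoking the Liouville theorem for the limiting Lane--Emden--Fowler equation that is stable outside a compact set — precisely the input of Theorem \ref{thm3} — so nothing new about $\lambda$ needs to be proved here.

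The main obstacle I anticipate is the bookkeeping in the blow-up when $u_k(x_k)$ is bounded but $|\nabla u_k(x_k)|$ is not: one must check that the rescaling that normalizes the gradient still produces a well-defined limiting equation (the right-hand side $\mu_k^{p\lambda/(\lambda+1-p)}f(\mu_k^{-p/(\lambda+1-p)}v_k)$ must be controlled when the argument of $f$ stays bounded, so that it tends to $0$ and the limit is genuinely $-\Delta_p v=0$), and that the resulting entire solution has bounded, nonvanishing gradient, so that Theorem \ref{thm1} applied on larger and larger balls forces $\nabla v\equiv 0$, the desired contradiction. Establishing the requisite locally uniform $C^{1,\alpha}$ convergence of the rescaled solutions — so that both the equation and the stability inequality pass to the limit — is the technical heart, and it rests on the interior regularity recalled in the Remark after \eqref{weaksf} together with the standard compactness for the $p$-Laplacian.
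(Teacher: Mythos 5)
Your overall blow-up framework and the identification of why the ``$1+$'' appears in \eqref{guji4} both match the paper's intent. However, your case split is based on a misconception that introduces a phantom second case. You write ``if instead the $u_k(x_k)$ stay bounded, then necessarily $|\nabla u_k(x_k)|\to\infty$, $\mu_k$ stays bounded,'' but this is false with the normalization you chose. Since $(\alpha+1)\cdot\frac{p}{\lambda+1}=\alpha$ with $\alpha=\frac{p}{\lambda+1-p}$, the condition $|v_k(0)|+|\nabla v_k(0)|^{p/(\lambda+1)}=1$ is equivalent to
\[
\mu_k^{\alpha}\bigl(|u_k(x_k)|+|\nabla u_k(x_k)|^{p/(\lambda+1)}\bigr)=1,
\]
so $\mu_k\to 0$ whenever either term blows up, not just $|u_k(x_k)|$. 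And one of the two must blow up: the doubling lemma gives $M_k(x_k)\geq M_k(y_k)$, and the ``$1+$'' in the failed estimate forces $M_k(y_k)>2k\to\infty$, which is exactly the observation the paper records as ``$\lambda_k\to 0$.'' Consequently your ``Case 2'' (a $p$-harmonic limit handled by Theorem \ref{thm1}) never occurs and should be deleted.

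The paper's actual proof is a single-case modification of the proof of Theorem \ref{thm3}: the ``$1+$'' guarantees $\lambda_k\to 0$, the rescaled equation is $-\Delta_p v_k=\lambda_k^{(\alpha+1)(p-1)+1}f(\lambda_k^{-\alpha}v_k)$, and using $(\alpha+1)(p-1)+1=\lambda\alpha$ together with the asymptotic hypothesis on $f$, the right-hand side converges locally uniformly to $l\,|v|^{\lambda-1}v$ (it tends to $0$ where $v=0$, since $f$ is bounded on bounded sets and $\lambda_k^{\lambda\alpha}\to 0$, and to $l|v|^{\lambda-1}v$ where $v\neq 0$, since the argument of $f$ diverges). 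One then invokes Lemma \ref{stable} and the Liouville theorem (Lemma \ref{Liouville2}) — your remark that a rescaling of $v$ removes the constant $l$ is correct and slightly more explicit than the paper's — to get $v\equiv 0$, contradicting the normalization at $0$. In short: your first case is essentially the paper's argument and is sound; your second case rests on a false claim about $\mu_k$ and is not needed.
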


\subsection{Estimates of higher order derivatives}
\begin{theorem}\label{gj}

Let $n\geq 2$, $1<\lambda<\lambda_{s}$ , here,
\begin{equation}
\lambda_{s}:=
\begin{cases}
\frac{n+2}{n-2}, \quad if \ N>2,\\
 \infty, \quad if \ N=2,\\
\end{cases}
\end{equation}
and let $\Omega$ be an arbitrary domain in  $\mathbb{R}^{N}$, $\Omega\neq\mathbb{R}^n$. Then for any nonnegative integer $s$, there exists $C=C(p, N,\lambda,s)>0$ (independent of $\Omega$ and $ u$) such that any nonnegative smooth solution $u$ of
\begin{equation}\label{laplace}
-\Delta u=u^{\lambda},\qquad x\in\Omega,
\end{equation}
satisfies
\begin{equation}\label{higherestimate}
\sum_{r=0}^{s}|\nabla^{r} u(x)|^{\frac{\lambda-1}{(\lambda-1)r+2}}\leq C dist^{-1}(x,\partial\Omega) ,\qquad \forall x \in \Omega.
\end{equation}
\end{theorem}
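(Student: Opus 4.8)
\textbf{Proof proposal for Theorem \ref{gj}.}

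The plan is to run the standard rescaled blow-up (doubling) argument, exactly as in the proof of Theorem \ref{thm2}/\ref{thm3}, but now tracking all derivatives up to order $s$ simultaneously. First I would argue by contradiction: assume there is no uniform constant, so there is a sequence of domains $\Omega_k \neq \mathbb{R}^N$ and nonnegative smooth solutions $u_k$ of $-\Delta u_k = u_k^\lambda$ in $\Omega_k$, together with points $x_k \in \Omega_k$, for which
$$
M_k(x_k) := \sum_{r=0}^{s} |\nabla^r u_k(x_k)|^{\frac{\lambda-1}{(\lambda-1)r+2}} \, \mathrm{dist}(x_k,\partial\Omega_k) \longrightarrow \infty .
$$
The key device is the Polacik--Quittner--Souplet doubling lemma applied to the scalar quantity $N_k(x) := \sum_{r=0}^{s}|\nabla^r u_k(x)|^{\frac{\lambda-1}{(\lambda-1)r+2}}$ (one should check this is continuous, which it is for smooth $u_k$); doubling produces new points $y_k$ with $N_k(y_k) =: \mu_k \to \infty$, with $N_k \le 2\mu_k$ on a ball $B(y_k, \ell \mu_k^{-1})$ for every fixed $\ell$ once $k$ is large. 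The crucial point making the exponents work is that the weight $\frac{\lambda-1}{(\lambda-1)r+2}$ on $|\nabla^r u|$ is exactly scaling-critical for the equation: under $v_k(z) = \mu_k^{-\frac{2}{\lambda-1}} u_k(y_k + \mu_k^{-1} z)$ one has $-\Delta v_k = v_k^\lambda$, and $\mu_k^{\frac{2}{\lambda-1}} \cdot \mu_k^{-r}$ rescaling of $\nabla^r u_k$ converts the bound $N_k \le 2\mu_k$ into $\sum_{r=0}^s |\nabla^r v_k(z)|^{\frac{\lambda-1}{(\lambda-1)r+2}} \le 2$ on $B(0,\ell)$, with equality $=1$ at $z=0$.

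Next I would extract a limit. The bound on $v_k$ in $C^0$ (from the $r=0$ term) is uniform on every $B(0,\ell)$, so by elliptic interior estimates for $-\Delta v_k = v_k^\lambda$ — standard Schauder bootstrapping, legitimate here because $p=2$ and $u_k$ is smooth, so there is no degeneracy to fight — $v_k$ is bounded in $C^{m}_{\mathrm{loc}}$ for every $m$; passing to a subsequence, $v_k \to v$ in $C^m_{\mathrm{loc}}(\mathbb{R}^N)$ for all $m$. The limit $v \ge 0$ solves $-\Delta v = v^\lambda$ on all of $\mathbb{R}^N$. Moreover the $C^m_{\mathrm{loc}}$ convergence passes the pointwise normalization to the limit: $\sum_{r=0}^s |\nabla^r v(0)|^{\frac{\lambda-1}{(\lambda-1)r+2}} = 1$, so $v$ is nontrivial. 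But $1 < \lambda < \lambda_s = \frac{n+2}{n-2}$ is precisely the subcritical Sobolev range, so the classical Gidas--Spruck Liouville theorem forces $v \equiv 0$ — a contradiction. This yields the existence of a finite universal constant $C(p,N,\lambda,s)$, and unwinding the doubling inequality at the original points $x_k$ gives \eqref{higherestimate}.

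I expect the main obstacle to be purely bookkeeping rather than conceptual: verifying that the \emph{sum} of differently-weighted derivative norms scales homogeneously and that the doubling lemma can be applied to this composite quantity (it must be checked continuous and that the doubling output really controls each $|\nabla^r v_k|$ on the fixed ball, not merely the sum — but since all terms are nonnegative, control of the sum controls each term). A secondary point to handle carefully is the regularity chain: I would first use the $C^0$ bound to get $v_k \in C^{1,\alpha}_{\mathrm{loc}}$, then observe $v_k^\lambda \in C^{1,\alpha}_{\mathrm{loc}}$ when $\lambda \ge 1$ (true here), then bootstrap Schauder to $C^{3,\alpha}$, and iterate; this needs $\lambda$ not too small relative to the number of derivatives wanted, but the hypothesis $\lambda > 1$ suffices to keep $u \mapsto u^\lambda$ at least $C^1$, and near points where $v$ could vanish one uses that $v\ge 0$ smooth forces enough flatness — in fact for the contradiction we only need $C^s$-convergence at the single point $z=0$, which the interior estimates comfortably provide. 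Everything else is a transcription of the blow-up scheme already used for Theorems \ref{thm2} and \ref{thm3}.
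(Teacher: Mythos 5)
Your proposal follows the same blow-up/doubling scheme as the paper: the same scalar quantity $M_k = \sum_{r=0}^{s}|\nabla^{r} u_k|^{\frac{\lambda-1}{(\lambda-1)r+2}}$ fed into the doubling lemma (Lemma \ref{double lemma}), the same rescaling $v_k(y)=\lambda_k^{2/(\lambda-1)}u_k(x_k+\lambda_k y)$ with $\lambda_k=M_k(x_k)^{-1}$, the same check that the weighted sum of derivative norms is exactly scale-invariant (so doubling gives $\sum_r|\nabla^r v_k|^{\frac{\lambda-1}{(\lambda-1)r+2}}\le 2$ on $B_k(0)$ with equality $=1$ at $0$), the same elliptic bootstrap to extract a $C^s_{\mathrm{loc}}$-convergent subsequence, and a Liouville theorem to force the limit to vanish. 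The one genuine point of departure is the Liouville theorem invoked at the end. You appeal to Gidas--Spruck: every nonnegative entire solution of $-\Delta v=v^{\lambda}$ with $1<\lambda<\frac{N+2}{N-2}$ is trivial. This is the right tool here, since it uses the subcriticality hypothesis $\lambda<\lambda_s$ exactly as the statement of Theorem \ref{gj} requires. The paper instead cites its Lemma \ref{Liouville0}, reasoning that the nonnegative limit $v$ satisfies $\Delta v=-v^{\lambda}\le 0$ and must therefore be constant, hence zero; but that argument never uses $\lambda<\lambda_s$, and the underlying claim (a nonnegative superharmonic function on $\mathbb{R}^N$ is constant) is not valid for $N\ge 3$ without further hypotheses. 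Your route is therefore the cleaner and more defensible closing step, and it also explains why the subcritical range appears in the statement, which the paper's own argument leaves unmotivated. Everything else in your proposal — including the remark that nontrivial nonnegative solutions are strictly positive (strong maximum principle), so the Schauder iteration on $v_k^{\lambda}$ is unobstructed even for non-integer $\lambda$ — is consistent with the paper's proof.
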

\begin{remark}
The requirements for the regularity of solution actually only need  $u\in C^{s}(\Omega)$. What's more, in fact, the requirements for the regularity of solution can be relaxed by applying the regularity theory of elliptic equation.
\end{remark}

\begin{remark}
 For the equation with general right hand term  $f$ that satisfies corresponding asymptotic conditions, by using the same method one can obtain the estimates of higher order derivatives of the solutions. The details are omitted here.
\end{remark}

\begin{remark}
Compared to the singularity and decay estimates of the solutions to elliptic equations that have been obtained in previous studies (e.g., \cite{ref17}), our result presented here gives a more precise estimate for higher order derivatives of the solution to p-Laplacian equation.

\end{remark}

\section{Lemmas}

In this section , we first introduce Liouville theorem for p-harmonic function \cite{ref11}.

\begin{lemma}\label{Liouville0}
Let $u$ be a nonnegative weak solution of
$$\Delta_{p}u=0,\qquad x\in\mathbb{R}^{N}\qquad(n>p)$$
or $$\Delta_{p}u\leq0,\qquad x\in\mathbb{R}^{N}\qquad (n\geq2)$$
then $u$ is  constant.\\
\end{lemma}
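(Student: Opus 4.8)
The two structural facts I would lean on are: (i) the operator $\Delta_{p}$ is invariant under $u\mapsto u+c$ and under $u\mapsto -u$, so that $u-\inf_{\mathbb{R}^{N}}u$ and $\sup_{\mathbb{R}^{N}}u-u$ are again weak (super/sub)solutions of the same kind whenever they are nonnegative; and (ii) the De Giorgi--Nash--Moser theory in its quasilinear form, which I would invoke as black boxes: the Harnack inequality for nonnegative $p$-harmonic functions, and the strong minimum principle together with a logarithmic Caccioppoli estimate for nonnegative $p$-supersolutions.

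\emph{The case $\Delta_{p}u=0$.} By the Harnack inequality there is $C_{H}=C_{H}(N,p)\ge 1$ with $\sup_{B_{R}(x_{0})}u\le C_{H}\inf_{B_{R}(x_{0})}u$ whenever $u\ge 0$ is $p$-harmonic in $B_{2R}(x_{0})$. Taking $x_{0}=0$ gives $\sup_{B_{R}}u\le C_{H}u(0)$ for every $R$, so $u$ is bounded on $\mathbb{R}^{N}$. Writing $m_{R}=\inf_{B_{R}}u$, $M_{R}=\sup_{B_{R}}u$ and applying Harnack in $B_{R}$ to the nonnegative $p$-harmonic functions $u-m_{2R}$ and $M_{2R}-u$ (both $p$-harmonic in $B_{2R}$), one gets $M_{R}-m_{2R}\le C_{H}(m_{R}-m_{2R})$ and $M_{2R}-m_{R}\le C_{H}(M_{2R}-M_{R})$; adding and rearranging yields
$$\mathrm{osc}_{B_{R}}u\ \le\ \theta\,\mathrm{osc}_{B_{2R}}u,\qquad \theta=\frac{C_{H}-1}{C_{H}+1}\in[0,1).$$
Iterating, $\mathrm{osc}_{B_{R}}u\le\theta^{k}\,\mathrm{osc}_{B_{2^{k}R}}u\le\theta^{k}\,\mathrm{osc}_{\mathbb{R}^{N}}u\to 0$, so $u$ is constant.

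\emph{The case $\Delta_{p}u\le 0$.} By the strong minimum principle either $u\equiv\inf_{\mathbb{R}^{N}}u$, and we are done, or — after subtracting the constant $\inf_{\mathbb{R}^{N}}u$ — we may assume $u>0$ everywhere. Testing $\int|\nabla u|^{p-2}(\nabla u,\nabla\varphi)\,dx\ge 0$ with $\varphi=\eta^{p}u^{1-p}\ge 0$, $\eta\in C_{c}^{\infty}(\mathbb{R}^{N})$, expanding $\nabla\varphi$, and using Young's inequality to absorb the cross term into the leading one, I would obtain the logarithmic Caccioppoli estimate
$$\int_{\mathbb{R}^{N}}\eta^{p}\left|\nabla\log u\right|^{p}dx\ \le\ C(N,p)\int_{\mathbb{R}^{N}}|\nabla\eta|^{p}\,dx.$$
It then suffices to choose $\eta$ making the right side arbitrarily small: the standard cutoff ($\eta\equiv 1$ on $B_{R}$, $\mathrm{supp}\,\eta\subset B_{2R}$, $|\nabla\eta|\le C/R$) gives $\int|\nabla\eta|^{p}\le CR^{N-p}$, which tends to $0$ when $N<p$; in the borderline case $N=p$ one uses instead the logarithmic cutoff interpolating between $B_{R}$ and $B_{R^{2}}$ proportionally to $\log(R^{2}/|x|)$, for which $\int|\nabla\eta|^{p}\le C(\log R)^{1-p}\to 0$. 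In either case $\nabla\log u\equiv 0$, hence $u$ is constant. (This is exactly the range in which $\mathbb{R}^{N}$ is $p$-parabolic, which is the structural reason a dimension restriction enters this branch.)

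The genuinely hard content lies in the two imported ingredients — the Harnack inequality for $p$-harmonic functions and the strong minimum principle for $p$-supersolutions, both due to Serrin and Trudinger — and, secondarily, in selecting the correct cutoff at the critical dimension $N=p$; once these are granted, the assembly above is routine. The whole statement is the Liouville theorem established in \cite{ref11}, which is what we cite.
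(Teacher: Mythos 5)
The paper itself supplies no proof of this lemma: it is imported verbatim from \cite{ref11}, so there is nothing internal to compare your argument against, and I evaluate the proposal on its own terms. It is a correct and essentially standard derivation of the \emph{true} content of the statement. The Harnack--oscillation argument in the first case is sound: $u-m_{2R}$ and $M_{2R}-u$ are again nonnegative $p$-harmonic (by the invariance of $\Delta_p$ under $u\mapsto u+c$ and its oddness under $u\mapsto-u$, as you note), the two Harnack inequalities add to give $\mathrm{osc}_{B_R}u\le\theta\,\mathrm{osc}_{B_{2R}}u$ with $\theta=(C_H-1)/(C_H+1)<1$, and boundedness (from the first Harnack application) lets you iterate the oscillation to zero. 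Notably this argument needs no restriction $N>p$ at all: it proves constancy of nonnegative entire $p$-harmonic functions for every $N\ge2$ and every $p\in(1,\infty)$, which is the version the paper actually invokes in the proof of Theorem \ref{thm1}, where $N$ and $p$ are unrelated. The logarithmic Caccioppoli computation in the second case is also correct, including the admissibility of the test function (the strong minimum principle reduces matters to $u>0$, so $\eta^pu^{1-p}$ is a legitimate compactly supported $W^{1,p}$ test function by density) and the choice of cutoffs in the ranges $N<p$ and $N=p$.

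The one substantive point: your supersolution argument establishes the conclusion only for $N\le p$, while the lemma as printed asserts the second branch for all $n\ge2$. This is not a defect of your proof --- the printed second branch is false for $N>p$: for $p=2$ and $N\ge3$ the function $u(x)=(1+|x|^2)^{-(N-2)/2}$ is a positive, nonconstant, entire superharmonic function, and analogous radial supersolutions exist whenever $N>p$ (where $\mathbb{R}^N$ is $p$-hyperbolic). You correctly identified parabolicity as the structural reason a dimension restriction must enter, but you should state explicitly that the hypotheses of the two branches appear to have been interchanged or garbled in the lemma (the equation case holds for all $N$; the inequality case only for $N\le p$); otherwise a reader will conclude your proof has a gap for $N>p$ in the second branch, when in fact no proof can close that gap because the assertion there is false.
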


For Lane-Emden-Fowler type equation \eqref{bianhao}, the following Liouville type theorems have been established in \cite{ref7}.
\begin{lemma}\label{Liouville1}
Let $u\in C^{1}(\mathbb{R}^{N})$ be a stable weak solution of \eqref{bianhao}, $p>2$, if
\begin{equation}\label{index}
\begin{cases}
p-1<\lambda<\infty,  \qquad \qquad  if N\leq\frac{p(p+3)}{p-1},\\
p-1<\lambda<\Lambda_{c}(N,p), \qquad if N>\frac{p(p+3)}{p-1},\\
\end{cases}
\end{equation}
here,
\begin{equation}
\Lambda_{c}(N,p)=\frac{[(p-1)N-p]^{2}+p^{2}(p-2)-p^{2}(p-1)N+2p^{2}\sqrt{(p-1)(N-1)}}{(N-p)[(p-1)N-p(p+3)]}.
\end{equation}
then $u\equiv0.$
\end{lemma}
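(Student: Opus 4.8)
The plan is to run the Farina-type ``stability bootstrap'': I would test the stability inequality \eqref{Lus1} and the equation \eqref{weaks} against powers of $u$ times a cut-off, combine the two, and extract a scale-invariant bound $\int_{B_R}|u|^{\lambda+2\gamma-1}\le CR^{\,N-p(\lambda+2\gamma-1)/(\lambda-p+1)}$ that collapses to $u\equiv0$ precisely in the range \eqref{index}. Throughout one works on $\mathbb{R}^N$, fixes a cut-off $\psi\in C_c^\infty(\mathbb{R}^N)$ with $0\le\psi\le1$ and a parameter $\gamma\ge1$, and uses that, since $u\in C^{1,\beta}_{loc}$ by the regularity recalled after \eqref{weaksf}, $|u|^{2\gamma-2}u\,\psi^{m}$ is an admissible test function in \eqref{weaks} while $|u|^{\gamma-1}u\,\psi^{m/2}$ lies in $C_c^1(\mathbb{R}^N)$ and is admissible in \eqref{Lus1} (for any large even integer $m$).

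First I would test \eqref{weaks} with $\varphi=|u|^{2\gamma-2}u\,\psi^{m}$; using $\nabla(|u|^{2\gamma-2}u)=(2\gamma-1)|u|^{2\gamma-2}\nabla u$ this yields
\[ (2\gamma-1)\int|u|^{2\gamma-2}|\nabla u|^{p}\psi^{m}\,dx=\int|u|^{\lambda+2\gamma-1}\psi^{m}\,dx-m\int|u|^{2\gamma-2}u\,|\nabla u|^{p-2}(\nabla u,\nabla\psi)\,\psi^{m-1}\,dx . \]
Next I would insert $\varphi=|u|^{\gamma-1}u\,\psi^{m/2}$ into \eqref{Lus1}; discarding the nonnegative term $(p-2)|\nabla u|^{p-4}(\nabla u,\nabla\varphi)^2$ via $(\nabla u,\nabla\varphi)^2\le|\nabla u|^2|\nabla\varphi|^2$ gives
\[ \lambda\int|u|^{\lambda+2\gamma-1}\psi^{m}\,dx\le(p-1)\gamma^{2}\int|u|^{2\gamma-2}|\nabla u|^{p}\psi^{m}\,dx+(\text{error terms carrying }\nabla\psi) . \]
Substituting the first identity into the second, the weighted $p$-energies cancel up to the factor $(p-1)\gamma^{2}/(2\gamma-1)$, the cross integrals $\int|u|^{2\gamma-2}u\,|\nabla u|^{p-2}(\nabla u,\nabla\psi)\psi^{m-1}$ partly cancel and the remainder is absorbed by Young's inequality — splitting the weight $|\nabla u|^{p-2}$ with the conjugate pairs $\tfrac{p}{p-1},p$ and $\tfrac{p}{p-2},\tfrac{p}{2}$, then re-feeding the first identity to reabsorb the weighted energy it produces — leaving
\[ \left(\lambda-\frac{(p-1)\gamma^{2}}{2\gamma-1}\right)\int|u|^{\lambda+2\gamma-1}\psi^{m}\,dx\le C\int|u|^{2\gamma+p-2}\,|\nabla\psi|^{p}\,\psi^{m-p}\,dx . \]
Since $\lambda>p-1$, the quadratic $(p-1)\gamma^{2}-2\lambda\gamma+\lambda$ has positive discriminant $4\lambda(\lambda-p+1)$, so the coefficient on the left is positive precisely when $\gamma$ lies in the open interval with endpoints $\gamma_{\pm}=\frac{\lambda\pm\sqrt{\lambda(\lambda-p+1)}}{p-1}$; since $\gamma_+>1$, the restriction $\gamma\ge1$ costs nothing.

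Now I would take $\psi=\psi_R$ with $\psi_R\equiv1$ on $B_R$, $\operatorname{supp}\psi_R\subset B_{2R}$ and $|\nabla\psi_R|\le C/R$, and write $t=\lambda+2\gamma-1$. Applying Hölder's inequality to the right-hand side with the conjugate exponents $\tfrac{t}{2\gamma+p-2}$ and $\tfrac{t}{\lambda-p+1}$ (retaining the full weight $\psi_R^{m}$ attached to $|u|^{2\gamma+p-2}$), using $\int|\nabla\psi_R|^{pt/(\lambda-p+1)}\le CR^{\,N-pt/(\lambda-p+1)}$, and absorbing $\int|u|^{t}\psi_R^{m}$, one obtains
\[ \int_{B_R}|u|^{\lambda+2\gamma-1}\,dx\le C\,R^{\,N-p\frac{\lambda+2\gamma-1}{\lambda-p+1}},\qquad C=C(N,p,\lambda). \]
Letting $R\to\infty$ then forces $u\equiv0$ as soon as this exponent can be made negative for some admissible $\gamma<\gamma_+$, i.e.\ $\lambda+2\gamma-1>N(\lambda-p+1)/p$. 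Equating $N-p\frac{\lambda+2\gamma_+-1}{\lambda-p+1}$ to zero and clearing the square root produces a quadratic in $\lambda$ whose discriminant equals $16p^{4}(p-1)(N-1)$ — this is the source of the term $\sqrt{(p-1)(N-1)}$ in $\Lambda_c(N,p)$ — and whose relevant root is exactly $\Lambda_c(N,p)$; moreover the leading coefficient of that quadratic is proportional to $(N-p)[(p-1)N-p(p+3)]$ and changes sign at $N=\frac{p(p+3)}{p-1}$, which is why below that dimension the exponent is negative for every $\lambda>p-1$. This is precisely the dichotomy \eqref{index}, so $u\equiv0$.

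\textbf{Main obstacle.} The algebra producing the coefficient $\lambda-(p-1)\gamma^{2}/(2\gamma-1)$ and the exponent $N-pt/(\lambda-p+1)$ is the same elementary computation as for $p=2$; the genuine work, and the only place the quasilinearity really bites, is the $\nabla\psi$-bookkeeping forced by the weight $|\nabla u|^{p-2}$. One must bound several error integrals (coming from the cross terms and from $|\nabla\varphi|^{2}$) by Young's inequality with \emph{two} different pairs of conjugate exponents, repeatedly re-feed the equation to reabsorb the weighted $p$-energy, and keep every cut-off power $\psi^{m},\psi^{m-1},\psi^{m-p},\dots$ nonnegative (hence the need for $m$ large). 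Two smaller points need a remark: that $|u|^{\gamma-1}u\,\psi^{m/2}\in C^{1}_c$ in the admissible range (true since $\gamma-1\ge0$ makes $t\mapsto|t|^{\gamma-1}t$ of class $C^{1}$), and that discarding the $(p-2)$-term of $L_u$ sacrifices nothing sharp — it cannot, because for the leading, gradient-parallel part of $\nabla\varphi$ the bound $(\nabla u,\nabla\varphi)^{2}\le|\nabla u|^{2}|\nabla\varphi|^{2}$ is an equality, so that term only perturbs the error integrals.
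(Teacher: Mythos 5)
The paper does \emph{not} prove Lemma~\ref{Liouville1}: it simply records it as an established result from~[7] (the Liouville theorem for stable sign-changing solutions of $-\Delta_p u=|u|^{\lambda-1}u$ on $\mathbb{R}^N$), so there is no in-paper proof to compare against. Your sketch reproduces the standard Farina-type stability bootstrap, which is exactly how the cited reference argues, and the skeleton is sound: testing \eqref{weaks} against $|u|^{2\gamma-2}u\,\psi^m$, testing \eqref{Lus1} against $|u|^{\gamma-1}u\,\psi^{m/2}$, combining to obtain the coefficient $\lambda-(p-1)\gamma^2/(2\gamma-1)$, the admissible window $\gamma\in(\gamma_-,\gamma_+)$ with $\gamma_\pm=\frac{\lambda\pm\sqrt{\lambda(\lambda-p+1)}}{p-1}$ (and indeed $\gamma_-<1<\gamma_+$ for $\lambda>p-1$), and the cut-off exponent $N-p\frac{\lambda+2\gamma-1}{\lambda-p+1}$. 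I verified that setting this to zero at $\gamma=\gamma_+$, clearing the square root, and solving the resulting quadratic does reproduce $\Lambda_c(N,p)$ and the dimensional threshold $N=\frac{p(p+3)}{p-1}$ exactly.

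Two inaccuracies worth fixing in an actual write-up. First, you say you ``discard'' the term $(p-2)|\nabla u|^{p-4}(\nabla u,\nabla\varphi)^2$: you cannot discard a nonnegative contribution appearing on the right of the stability inequality $\lambda\int|u|^{\lambda-1}\varphi^2\le\int|\nabla u|^{p-2}|\nabla\varphi|^2+(p-2)\int|\nabla u|^{p-4}(\nabla u,\nabla\varphi)^2$. What you in fact do — and what your subsequent coefficient $(p-1)\gamma^2$ confirms — is bound it above by $(p-2)\int|\nabla u|^{p-2}|\nabla\varphi|^2$ via Cauchy--Schwarz; this is an equality on the gradient-parallel part of $\nabla\varphi$, so it is lossless in the main term, as you correctly note later. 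Say ``bound above'' rather than ``discard.'' Second, the discriminant of the quadratic in $\lambda$ obtained after squaring is $16p^4(p-1)^3(N-1)$, not $16p^4(p-1)(N-1)$: one finds the leading coefficient equals $(p-1)(N-p)\bigl[(p-1)N-p(p+3)\bigr]$ and the middle coefficient also carries a factor $(p-1)$, so the overall factor $(p-1)^2$ cancels across numerator and denominator, leaving the clean $\sqrt{(p-1)(N-1)}$ and the denominator $(N-p)\bigl[(p-1)N-p(p+3)\bigr]$ of $\Lambda_c$. Neither slip affects the conclusion, but as stated the arithmetic is internally inconsistent. Finally, you are explicit that the $\nabla\psi$ bookkeeping (the iterated Young and H\"older absorptions, the choice of $m$) is only outlined; a complete proof would have to execute those absorptions with explicit exponents, but the framework you lay out is the correct one and leads precisely to the stated range \eqref{index}.
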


\begin{lemma}\label{Liouville2}
Let $u\in C^{1}(\mathbb{R}^{N})$ be a weak solution of \eqref{bianhao}  which is stable outside of a compact set of $K$, $p>2$, if
\begin{equation}\label{index3}
\begin{cases}
p-1<\lambda<\infty,  \qquad\qquad  if N\leq p,\\
p-1<\lambda<\frac{N(p-1)+p}{N-p} , \qquad if N>p,\\
\end{cases}
\end{equation}
then $u\equiv0.$
\end{lemma}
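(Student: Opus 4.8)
\emph{Proof proposal.} I would run the \emph{Pohozaev-plus-stability} scheme, in the spirit of the argument used for $-\Delta u=|u|^{\lambda-1}u$ when $p=2$. The idea has two moves: (i) use the stability inequality \eqref{Lus2} to upgrade ``stable outside the compact set $K$'' into the global bound $\int_{\mathbb{R}^N}(|\nabla u|^p+|u|^{\lambda+1})\,dx<\infty$ --- in fact into $u\equiv0$ outside a large ball --- and (ii) feed this into the Pohozaev identity together with the energy identity to force $u\equiv0$ in the stated subcritical range. Throughout one uses the $C^{1,\beta}_{loc}\cap W^{2,2}_{loc}$ regularity recalled in the introduction so that the test functions below are admissible, and one uses $p>2$ essentially (the degenerate term in $L_u$ then has the favourable sign).

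\emph{Step 1 (the crux: finite energy).} Fix $R_0$ with $K\subset B_{R_0}$; for $R>4R_0$ take a cutoff $\zeta=\zeta_R\in C_c^1(\mathbb{R}^N\setminus\overline{B_{2R_0}})$ with $\zeta\equiv1$ on $B_R\setminus B_{4R_0}$, $\zeta\equiv0$ off $B_{2R}$, $|\nabla\zeta|\le C/R$, and a large integer $m$. Testing \eqref{Lus2} with $\varphi=u\zeta^m$ and bounding the degenerate term by Cauchy--Schwarz, $(p-2)|\nabla u|^{p-4}(\nabla u\cdot\nabla\varphi)^2\le(p-2)|\nabla u|^{p-2}|\nabla\varphi|^2$ (legitimate since $p>2$), yields an inequality whose leading part is $(p-1)\int|\nabla u|^p\zeta^{2m}$, the remainder carrying a factor $\nabla\zeta$. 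Testing the weak equation \eqref{weaks} with $u\zeta^{2m}\in C_c^1$ gives
\[
\int_{\mathbb{R}^N}|\nabla u|^p\zeta^{2m}=\int_{\mathbb{R}^N}|u|^{\lambda+1}\zeta^{2m}-2m\int_{\mathbb{R}^N}|\nabla u|^{p-2}u\zeta^{2m-1}\,\nabla u\cdot\nabla\zeta .
\]
Substituting, the terms $\int|\nabla u|^p\zeta^{2m}$ and the first-order cross terms cancel exactly, leaving
\[
(\lambda-p+1)\int_{\mathbb{R}^N}|u|^{\lambda+1}\zeta^{2m}\ \le\ C m^2\int_{\mathbb{R}^N}|\nabla u|^{p-2}u^2\zeta^{2m-2}|\nabla\zeta|^2 ,
\]
with positive left coefficient because $\lambda>p-1$. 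Now Young's inequality (splitting off $\varepsilon|\nabla u|^p\zeta^{2m}+C_\varepsilon u^p|\nabla\zeta|^p\zeta^{2m-p}$), absorption of the $\varepsilon$-term via the energy identity, and Hölder's inequality with exponents $\tfrac{\lambda+1}{p},\tfrac{\lambda+1}{\lambda+1-p}$ --- choosing $m$ large so the surplus powers of $\zeta$ stay nonnegative --- reduce the right side to $C\big(\int|u|^{\lambda+1}\zeta^{2m}\big)^{p/(\lambda+1)}R^{-\sigma}$ with $\sigma:=p-N\tfrac{\lambda+1-p}{\lambda+1}$, because $|\nabla\zeta_R|\le C/R$ on a set of measure $\le CR^N$. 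Hence $\big(\int|u|^{\lambda+1}\zeta_R^{2m}\big)^{(\lambda+1-p)/(\lambda+1)}\le CR^{-\sigma}$; and a direct computation shows $\sigma>0\iff\lambda<\tfrac{N(p-1)+p}{N-p}$ when $N>p$, while $\sigma>0$ automatically when $N\le p$. Letting $R\to\infty$ (Fatou) gives $\int_{\mathbb{R}^N\setminus B_{4R_0}}|u|^{\lambda+1}=0$, so $u\equiv0$ outside $\overline{B_{4R_0}}$ and, a fortiori, $\int_{\mathbb{R}^N}(|\nabla u|^p+|u|^{\lambda+1})\,dx<\infty$.

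\emph{Step 2 (Pohozaev identity and conclusion).} Since $u$ is now compactly supported and $C^1$, testing \eqref{weaks} with $u$ (no boundary term) gives $A:=\int_{\mathbb{R}^N}|\nabla u|^p=\int_{\mathbb{R}^N}|u|^{\lambda+1}=:B$, while the Pohozaev identity for $-\Delta_p u=|u|^{\lambda-1}u$ on a ball $B_R\supset\mathrm{supp}\,u$ --- whose boundary integral vanishes because $u\equiv0$ near $\partial B_R$ --- gives $\tfrac{N-p}{p}A=\tfrac{N}{\lambda+1}B$. Subtracting, $\big(\tfrac{N-p}{p}-\tfrac{N}{\lambda+1}\big)A=0$; under the hypothesis $\tfrac{N}{\lambda+1}>\tfrac{N-p}{p}$ (equivalent to $(\lambda+1)(N-p)<Np$ when $N>p$, and trivial when $N\le p$), so the coefficient is strictly negative and $A=0$. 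Then $\nabla u\equiv0$, $u$ is constant, and the equation forces $u\equiv0$.

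\emph{Main obstacle.} The work is all in Step 1: one must choose the cutoff power $m$ and the Hölder pair so that the single surviving power of $R$ is exactly the one whose sign is governed by the Sobolev exponent $\tfrac{N(p-1)+p}{N-p}$ --- this is precisely where the hypothesis on $\lambda$ is consumed --- and, specifically for $p>2$, one must handle the singular weight $|\nabla u|^{p-4}$ in $L_u$, which is harmless here only because it enters through $|\nabla u|^{p-4}(\nabla u\cdot\nabla\varphi)^2\le|\nabla u|^{p-2}|\nabla\varphi|^2$. Rigorously justifying the $p$-Laplacian Pohozaev identity for merely $C^{1,\beta}_{loc}\cap W^{2,2}_{loc}$ weak solutions of the degenerate equation is a secondary, standard technicality (approximation / Pucci--Serrin-type variational identities) and does not affect the exponent range.
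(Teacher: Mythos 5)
First, note that the paper does not prove this lemma at all: it is quoted verbatim from reference \cite{ref7} (``the following Liouville type theorems have been established in [7]''), so there is no in-paper proof to compare against. Your two-step scheme --- stability plus the equation with cutoffs to get an energy bound, then a Pohozaev/variational identity to kill the energy in the subcritical range --- is indeed the standard route taken in that literature, and your bookkeeping of the exponent ($\sigma>0\iff\lambda<\frac{N(p-1)+p}{N-p}=p^*-1$, matching the negativity of the Pohozaev coefficient) is correct.

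However, Step 1 contains a genuine error that Step 2 then inherits. Because stability is only assumed \emph{outside} $K$, your cutoff $\zeta_R$ must vanish on a neighborhood of $K$, so $\nabla\zeta_R$ is supported on \emph{two} annuli: the outer one $B_{2R}\setminus B_R$, where indeed $|\nabla\zeta_R|\le C/R$ and the measure is $O(R^N)$, and the \emph{inner} one $B_{4R_0}\setminus B_{2R_0}$, where $|\nabla\zeta_R|\sim 1/R_0$ is independent of $R$. Your stated bound $|\nabla\zeta_R|\le C/R$ is therefore false on the inner annulus, and the correct outcome of the H\"older step is
$\bigl(\int|u|^{\lambda+1}\zeta_R^{2m}\bigr)^{(\lambda+1-p)/(\lambda+1)}\le C_0+CR^{-\sigma}$
with an $R$-independent constant $C_0$ coming from the inner transition region. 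Letting $R\to\infty$ then yields only \emph{finite energy}, $\int_{\mathbb{R}^N}\bigl(|\nabla u|^p+|u|^{\lambda+1}\bigr)\,dx<\infty$, and emphatically not $u\equiv0$ outside $\overline{B_{4R_0}}$; the vanishing conclusion would prove far too much (it is exactly what the whole Pohozaev step exists to supply). Consequently, in Step 2 you may not assume $u$ is compactly supported: both the identity $A=B$ and the Pohozaev identity must be run on balls $B_{R_j}$ with radii $R_j\to\infty$ chosen (by the standard averaging argument, using $\int_1^\infty\int_{\partial B_R}(|\nabla u|^p+|u|^{\lambda+1})\,dS\,dR<\infty$) so that $R_j\int_{\partial B_{R_j}}(|\nabla u|^p+|u|^{\lambda+1})\,dS\to0$, which makes the boundary terms vanish in the limit. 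With that repair --- and the approximation needed to justify the $p$-Laplacian Pohozaev identity for $C^{1,\beta}_{loc}\cap W^{2,2}_{loc}$ solutions, which you correctly flag --- the argument closes; as written, the claim ``$u\equiv0$ outside $\overline{B_{4R_0}}$'' is a gap, not a technicality.
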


Along with the above Liouville type theorems, the following "Doubling lemma" proposed in \cite{ref9} also plays  an important role in blow-up analysis for the proof in this paper.
\begin{lemma}\label{double lemma}
Let $(X,d)$ be a compact metric space and let $D\subset\Sigma\subset X$, where $D$ is non-empty and $\Sigma$ is closed. Set $\Gamma:=\Sigma\setminus D$. Let $M:D\rightarrow(0,\infty)$ be bounded on compact subset of $D$, and fix a real $k>0$. If $y\in D$ satisfies
$$ M(y)dist(y,\Gamma)>2k ,$$
then there exists $x\in D$ such that
$$ M(x)dist(x,\Gamma)>2k,\qquad M(x)\geq M(y),$$
and
$$M(z)\leq 2M(x),\qquad \forall z\in D\cap \overline{B}(x,k M^{-1}(x)).$$

\end{lemma}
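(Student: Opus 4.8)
\textbf{Proof proposal for Theorem \ref{gj}.}

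The plan is to argue by contradiction using a blow-up (rescaling) argument, in the spirit of the Polacik--Quittner--Souplet scheme, applied simultaneously to all derivatives up to order $s$. Define, for a putative solution $u$ on a domain $\Omega$, the quantity
\begin{equation}\label{Mdef}
M(x):=\sum_{r=0}^{s}|\nabla^{r}u(x)|^{\frac{\lambda-1}{(\lambda-1)r+2}},\qquad x\in\Omega.
\end{equation}
The exponents $\alpha_{r}:=\frac{\lambda-1}{(\lambda-1)r+2}$ are chosen precisely so that, under the natural scaling $u\mapsto u_{\mu}(y):=\mu^{\frac{2}{\lambda-1}}u(x_{0}+\mu y)$ that leaves $-\Delta u=u^{\lambda}$ invariant, the term $|\nabla^{r}u|^{\alpha_{r}}$ scales like $\mu^{-1}$ for every $r$; this is the homogeneity that makes $M$ the right object to estimate against $dist^{-1}(\cdot,\partial\Omega)$. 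Suppose the conclusion fails. Then there are domains $\Omega_{k}\neq\mathbb{R}^{N}$, nonnegative smooth solutions $u_{k}$ of $-\Delta u_{k}=u_{k}^{\lambda}$ on $\Omega_{k}$, and points $y_{k}\in\Omega_{k}$ with $M_{k}(y_{k})\,dist(y_{k},\partial\Omega_{k})>2k$, where $M_{k}$ is \eqref{Mdef} formed from $u_{k}$.

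Next I would apply the Doubling Lemma (Lemma \ref{double lemma}), with $X$ a closed ball containing the relevant sets, $D=\Omega_{k}$, $\Sigma=\overline{\Omega_{k}}$, $\Gamma=\partial\Omega_{k}$, and $M=M_{k}$, to produce points $x_{k}\in\Omega_{k}$ with $M_{k}(x_{k})\,dist(x_{k},\partial\Omega_{k})>2k$ and $M_{k}(z)\le 2M_{k}(x_{k})$ for all $z\in\Omega_{k}$ with $|z-x_{k}|\le k\,M_{k}(x_{k})^{-1}$. Set $\lambda_{k}:=M_{k}(x_{k})\to\infty$ (it tends to infinity because otherwise the doubling inequality $M_k(x_k)\,dist(x_k,\partial\Omega_k)>2k$ would force distances to blow up, and one rules that out as in \cite{ref9}) and rescale:
\begin{equation}\label{rescale}
v_{k}(y):=\lambda_{k}^{-\frac{2}{\lambda-1}}\,u_{k}\!\left(x_{k}+\lambda_{k}^{-1}y\right),\qquad |y|<k.
\end{equation}
Then $v_{k}$ solves $-\Delta v_{k}=v_{k}^{\lambda}$ on the ball $B(0,k)$, and by the choice of the exponents $\alpha_{r}$ the rescaled quantity $\widetilde{M}_{k}(y):=\sum_{r=0}^{s}|\nabla^{r}v_{k}(y)|^{\alpha_{r}}$ satisfies $\widetilde{M}_{k}(0)=1$ and $\widetilde{M}_{k}(y)\le 2$ for $|y|<k$. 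In particular all of $v_{k},\nabla v_{k},\dots,\nabla^{s}v_{k}$ are locally uniformly bounded. Using interior elliptic regularity (Schauder estimates bootstrapped from $-\Delta v_{k}=v_{k}^{\lambda}$, and interior gradient estimates for the derivative equations) one upgrades this to local uniform $C^{s+1,\beta}$ bounds, so that along a subsequence $v_{k}\to v$ in $C^{s}_{loc}(\mathbb{R}^{N})$, where $v\ge 0$ solves $-\Delta v=v^{\lambda}$ on all of $\mathbb{R}^{N}$ and satisfies $\sum_{r=0}^{s}|\nabla^{r}v(0)|^{\alpha_{r}}=1$; in particular $v$ is nontrivial. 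Since $1<\lambda<\lambda_{s}$ (the subcritical Sobolev range), the classical Gidas--Spruck Liouville theorem for $-\Delta v=v^{\lambda}$ on $\mathbb{R}^{N}$ forces $v\equiv 0$, contradicting $\widetilde{M}(0)=1$. This contradiction proves \eqref{higherestimate}.

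The main obstacle is the compactness step: converting the pointwise bound on $M_{k}$ (i.e., on $v_{k}$ together with derivatives up to order $s$) into enough uniform regularity to pass to a limit that is genuinely a solution and retains the normalization at the origin. Two points need care here. First, the bound on $\widetilde M_k$ only directly controls $|\nabla^r v_k|$ for $r\le s$; to get a convergent subsequence in $C^s_{loc}$ one must produce a uniform modulus of continuity for $\nabla^{s}v_{k}$, which is where one invokes the equation: differentiating $-\Delta v_k=v_k^\lambda$ up to order $s-1$ and applying interior estimates (Schauder/Calder\'on--Zygmund) on slightly smaller balls gives uniform $C^{s,\beta}_{loc}$ bounds, hence equicontinuity. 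Second, one must check that the normalization survives the limit — since $\widetilde M_k(0)=1$ with each summand continuous and uniformly bounded, the limit satisfies $\widetilde M(0)=1$, so $v\not\equiv 0$; this is the standard but essential ``no loss of mass at the base point'' observation. Everything else — the invariance of the PDE under \eqref{rescale}, the bookkeeping of the exponents $\alpha_r$, and the exclusion of the degenerate case $\lambda_k\not\to\infty$ — is routine and parallels \cite{ref9}.
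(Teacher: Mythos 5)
Your proposal does not prove the statement in question. The statement to be proved is Lemma \ref{double lemma}, the Doubling Lemma itself --- a purely metric-space assertion about a function $M:D\to(0,\infty)$ that is bounded on compact subsets of $D$. What you have written instead is a blow-up proof of Theorem \ref{gj}, and that argument \emph{invokes} Lemma \ref{double lemma} as one of its ingredients; it cannot serve as a proof of the lemma. Nothing in your text engages with the actual content of the lemma: there is no argument producing the point $x$ with the doubling property $M(z)\le 2M(x)$ on $\overline{B}(x,kM^{-1}(x))$, no use of the compactness of $X$ or the closedness of $\Sigma$, and no use of the hypothesis that $M$ is bounded on compact subsets of $D$. (For what it is worth, the paper also gives no proof of this lemma --- it is quoted from Polacik--Quittner--Souplet --- but that does not change the fact that your submission proves a different statement.)

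A correct proof of Lemma \ref{double lemma} is an iterative selection argument with no PDE content. Suppose no admissible $x$ exists. Starting from $x_{0}=y$, one shows inductively that for each $j$ there is $x_{j+1}\in D\cap\overline{B}\bigl(x_{j},kM^{-1}(x_{j})\bigr)$ with $M(x_{j+1})>2M(x_{j})$ (if $x_{j}$ already satisfied the doubling property one would be done, since $M(x_{j})\ge M(x_{0})=M(y)$ and $M(x_{j})\,dist(x_{j},\Gamma)>2k$ is preserved along the sequence). Then $M(x_{j})\ge 2^{j}M(y)\to\infty$, while $d(x_{j+1},x_{j})\le kM^{-1}(x_{j})\le k\,2^{-j}M^{-1}(y)$, so $(x_{j})$ is Cauchy and stays at distance at least $dist(y,\Gamma)-2kM^{-1}(y)>0$ from $\Gamma$ (using $\sum_j d(x_{j+1},x_j)\le 2kM^{-1}(y)<dist(y,\Gamma)$); hence the $x_{j}$ lie in a compact subset of $D$ (here compactness of $X$ and closedness of $\Sigma$ enter), contradicting the boundedness of $M$ on compact subsets of $D$. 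That is the argument you need to supply; the blow-up machinery belongs to the applications of the lemma, not to its proof.
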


In addition, we present here the following result regarding the convergence of weak stable solutions of the p-Laplacian equation, which will be used in proofs of Theorems \ref{thm2} and \ref{thm3} . 

\begin{lemma}\label{stable}
Let $\Omega_{k}\upuparrows\mathbb{R}^{N}, K$ be a compact subset in $\mathbb{R}^{N}$. Assume that $u_{k}$ are a set of weak solutions to \eqref{bianhao} on $\Omega_{k}$, $k=1,2,\cdots $, respectively and that $($as $k$ large enough$)$ $u_{k}$ is stable outside of $K$. If
$$
u_{k}\rightarrow u \qquad in\  C_{loc}^{1,\alpha}(\mathbb{R}^{N}),
$$
then $u$ is a weak solution of \eqref{bianhao} and is stable outside of $K$.
\end{lemma}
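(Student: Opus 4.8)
The plan is to prove the two assertions — that $u$ is a weak solution of \eqref{bianhao} on $\mathbb{R}^{N}$ and that $u$ is stable outside $K$ — separately, each by a direct passage to the limit in the corresponding integral relation satisfied by the $u_{k}$. Two facts make this work: (i) since $\Omega_{k}\upuparrows\mathbb{R}^{N}$, every compact set $\Sigma\subset\mathbb{R}^{N}$ satisfies $\Sigma\subset\Omega_{k}$ for all large $k$, and if in addition $\Sigma\cap K=\emptyset$ then $\Sigma\subset\Omega_{k}\setminus K$ for all large $k$; and (ii) $u_{k}\to u$ in $C^{1,\alpha}_{loc}(\mathbb{R}^{N})$ forces $u_{k}\to u$ and $\nabla u_{k}\to\nabla u$ uniformly on each such $\Sigma$, and also gives $u\in C^{1}(\mathbb{R}^{N})$, so that \eqref{weaks} and the form $L_{u}$ of Definition \ref{Defstable} are meaningful for $u$.

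First I would treat the weak-solution claim. Fix $\varphi\in C_{c}^{\infty}(\mathbb{R}^{N})$ and put $\Sigma:=\mathrm{supp}\,\varphi$. By (i), $\varphi$ is an admissible test function in \eqref{weaks} for $u_{k}$ once $k$ is large, and all the integrals there reduce to integrals over $\Sigma$. The integrands $|\nabla u_{k}|^{p-2}(\nabla u_{k},\nabla\varphi)$ and $|u_{k}|^{\lambda-1}u_{k}\varphi$ are continuous functions of $(\nabla u_{k},u_{k})$ — here one uses that $\xi\mapsto|\xi|^{p-2}\xi$ is continuous on $\mathbb{R}^{N}$ (even for $1<p<2$, since $|\xi|^{p-1}\to0$ as $\xi\to0$) and that $t\mapsto|t|^{\lambda-1}t$ is continuous on $\mathbb{R}$ because $\lambda>1$ — so by (ii) they converge uniformly on $\Sigma$. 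Letting $k\to\infty$ in \eqref{weaks} for $(u_{k},\varphi)$ yields \eqref{weaks} for $(u,\varphi)$; since $\varphi$ was arbitrary, $u$ solves \eqref{bianhao} weakly on $\mathbb{R}^{N}$.

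Next I would treat the stability claim. Fix $\varphi\in C_{c}^{1}(\mathbb{R}^{N}\setminus K)$ and set $\Sigma:=\mathrm{supp}\,\varphi$, a compact set with positive distance from $K$. By (i), $\varphi\in C_{c}^{1}(\Omega_{k}\setminus K)$ for all large $k$, so by hypothesis $u_{k}$ is stable outside $K$ for such $k$ and \eqref{Lus2} gives $L_{u_{k}}(\varphi,\varphi)\geq0$. Using Definition \ref{Defstable} one can write $L_{w}(\varphi,\varphi)=\int_{\Sigma}(A(\nabla w)\nabla\varphi,\nabla\varphi)\,dx-\lambda\int_{\Sigma}|w|^{\lambda-1}\varphi^{2}\,dx$ with $A(\xi)=|\xi|^{p-2}I+(p-2)|\xi|^{p-4}\xi\otimes\xi$; the key observation is that $\xi\mapsto A(\xi)$ is continuous on all of $\mathbb{R}^{N}$ when $p\geq2$ (it is plainly continuous for $\xi\neq0$, while $\|A(\xi)\|\leq(p-1)|\xi|^{p-2}\to0$ as $\xi\to0$), and that $t\mapsto|t|^{\lambda-1}$ is continuous on $\mathbb{R}$ since $\lambda>1$. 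Hence by (ii) both integrands converge uniformly on $\Sigma$, so $L_{u}(\varphi,\varphi)=\lim_{k\to\infty}L_{u_{k}}(\varphi,\varphi)\geq0$; as $\varphi$ was arbitrary, $u$ is stable outside $K$.

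I expect no serious obstacle: the argument is the standard "pass to the limit under the integral sign" once one notices that the nonlinearity and the coefficients of the second variation are continuous functions of $(\nabla u,u)$ that transform uniform convergence into uniform convergence. The only point needing a word of care is the continuity of the degenerate coefficients at $\nabla u=0$: this holds precisely because $p\geq2$ (the range in which this lemma is applied, see Theorems \ref{thm2} and \ref{thm3}); for $1<p<2$ the matrix $A(\xi)$ is unbounded near the origin and one would need a different, e.g.\ dominated-convergence, argument, but that case does not arise here. It is also worth noting that only $C^{1}$-information on $u$ is ever used, so the $C^{1,\alpha}_{loc}$ hypothesis is more than enough.
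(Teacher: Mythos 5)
Your proof is correct and takes the same approach as the paper: fix a test function, observe that its compact support lies in $\Omega_{k}$ (resp.\ $\Omega_{k}\setminus K$) for $k$ large, and pass to the limit in the integral identities \eqref{weaks} and \eqref{Lus2} using the locally uniform $C^{1}$ convergence. Your justification via continuity of $\xi\mapsto|\xi|^{p-2}\xi$ on $\mathbb{R}^{N}$ and, for $p\geq2$, of the second-variation coefficient $\xi\mapsto A(\xi)=|\xi|^{p-2}I+(p-2)|\xi|^{p-4}\xi\otimes\xi$ is a cleaner packaging than the paper's termwise difference estimates (which as written contain a typo, $(|\nabla u_k|-|\nabla u|)^{p-2}$ in place of $|\nabla u_k|^{p-2}-|\nabla u|^{p-2}$), but it is the same argument in substance.
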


\section{Proofs}

In this section we apply the method of blow-up analysis to derive pointwise estimates. The method of blow-up is an efficient approach to deduce estimates in  partial differential equations, which has been applied by many scholars in, for example, \cite{ref8,ref9,ref10}. The corresponding Liouville type theorems ( e.g. \cite{ref4,ref5,ref7,ref8,ref11,ref17}) are important basis of this method. This method can be briefly described as follows: by proof of contradiction, assuming that an estimate (in terms of the distance to $\partial\Omega $) fails, we could construct an appropriate auxiliary function and use "doubling" property, then the sequence of violating solutions $u_{k}$ will be increasingly large along a sequence of points $x_{k}$, such that each $x_{k}$ has a suitable neighborhood where the relative growth of $u_{k}$ remains controlled. After appropriate rescaling, we can blow up the sequence of neighborhoods and pass to the limit to obtain a bounded solution of a limiting problem in the whole of $\mathbb{R}^{N}$ based on $L^{p}$ estimates or $C^{\alpha}$ estimates. By applying Liouville type theorems to the limiting equation we finally derive a contradiction.

\subsection{The proof of Theorem \ref{thm1}}
\begin{proof}[Proof of Theorem\ref{thm1}]
Under the same condition with that in Theorem \ref{thm1}, we just need to prove the following estimate: $\forall \epsilon \in (0, 1)$, there exists a constant $C=C(p, N)$, such that any $C^{1}$ weak solution $u$ of
$$\Delta_{p}u=0,\qquad x\in\Omega,$$
satisfies
\begin{equation}\label{eqn1}
|\nabla u(x)|\leq C \sup_{\Omega}|u|\cdot diam^{\epsilon}(\Omega) \cdot dist^{-(1+\epsilon)}(x,\partial\Omega) ,\qquad \forall x \in \Omega.
\end{equation}
Assume $u\geq0$, otherwise let
$$w=u+sup_{\Omega}|u|, $$ \\
then $w\geq0$, and $$\Delta_{p}w=0, \qquad x\in\Omega.$$
If for $w,$ \eqref{eqn1} holds, then for $u$ and $\forall x \in \Omega$ we have
\begin{align*}
|\nabla u(x)|&=|\nabla w(x)|\leq C \mathop{\sup_{\Omega}}|w|\cdot diam^{\epsilon}(\Omega)\cdot dist^{-1+\epsilon}(x,\partial\Omega)\\
&\leq C \mathop{\sup_{\Omega}}|u|\cdot diam^{\epsilon}(\Omega)\cdot dist^{-(1+\epsilon)}(x,\partial\Omega).
\end{align*}
Moreover, we assume $\mathop{\sup_{\Omega}}|u|\cdot diam^{\epsilon}(\Omega)=1$. Otherwise, let $$\widetilde{w}=\frac{u}{\mathop{\sup_{\Omega}}|u|\cdot diam^{\epsilon}(\Omega)}, $$
then $\mathop{\sup_{\Omega}}|\widetilde{w}|\cdot diam^{\epsilon}(\Omega)=1$,  and $$\Delta_{p}\widetilde{w}=0, \qquad x\in\Omega.$$
If for $\widetilde{w}$,  \eqref{eqn1} holds, then for $u$, by
  $$\frac{|\nabla u(x)|}{\mathop{\sup_{\Omega}}|u|\cdot diam^{\epsilon}(\Omega)}=|\nabla \widetilde{w}(x)|\leq C dist^{-(1+\epsilon)}(x,\partial\Omega),$$
we know that
  \begin{equation}
  |\nabla u(x)|\leq C \sup_{\Omega}|u|\cdot diam^{\epsilon}(\Omega)\cdot dist^{-1+\epsilon}(x,\partial\Omega) ,\qquad \forall x \in \Omega.\\
  \end{equation}
Therefore, we only need to prove that if $u$ satisfies
$\Delta_{p}u=0$ on $\Omega$, and $u\geq0, \sup_{\Omega}|u|\cdot diam(\Omega)=1$, then there holds   \begin{equation}\label{eqn2}
  |\nabla u(x)|\leq C \cdot dist^{-(1+\epsilon)}(x,\partial\Omega) ,\qquad \forall x \in \Omega.
  \end{equation}
If \eqref{eqn2} fails, then for $k=1,2,\cdots $, there exist sequences $\Omega_{k},u_{k},y_{k}\in\Omega_{k}$ and $\epsilon_{k} \in (0,1)$,
such that $u_{k}$ is nonnegative $C^{1}$ weak solution of $\Delta_{p}u=0 $ on $\Omega_{k}$, with $\displaystyle{\sup_{\Omega_{k}}u_{k}\cdot diam^{\epsilon_{k}}(\Omega_{k})=1}$,
and the function
$$M_{k}:=|\nabla u_{k}|^{\frac{1}{1+\epsilon_{k}}} $$
satisfies
$$M_{k}(y_{k})>2k dist^{-1}(y_{k},\partial\Omega_{k}).$$
By Lemma \ref{double lemma}, it follows that $\exists x_{k}\in\Omega_{k}$,  such that
$$M_{k}(x_{k})>2k dist^{-1}(x_{k},\partial\Omega_{k}), $$
and
$$M_{k}(z)\leq2M_{k}(x_{k}), \quad\forall z:|z-x_{k}|\leq k M_{k}^{-1}(x_{k}).$$
Let $\lambda_{k}:=M_{k}^{-1}(x_{k})$, and we rescale $u_{k}$ by setting
$$v_{k}(y):=k^{\epsilon_{k}}\lambda_{k}^{\epsilon_{k}} \cdot u_{k}(x_{k}+\frac{1}{k^{\epsilon_{k}}}\lambda_{k}y),\qquad \forall y:|y|\leq k. $$
Note  that as $|y|\leq k $,
$$|\frac{1}{k^{\epsilon_{k}}} \cdot \lambda_{k} y|\leq \frac{1}{k^{\epsilon_{k}}} \cdot \frac{1}{2k}dist(x_{k},\partial\Omega_{k})\cdot k^{1+\epsilon_{k}}=\frac{1}{2}dist(x_{k},\partial\Omega_{k}), $$
thus $x_{k}+\frac{1}{k^{\epsilon_{k}}} \cdot \lambda_{k} y \in\Omega_{k}$.
As such, $v_{k}$ is well defined.
Meanwhile we have
\begin{align*}\label{vk}
|v_{k}(y)|&=k^{\epsilon_{k}}\lambda_{k}^{\epsilon_{k}} \cdot u_{k}(x_{k}+\frac{1}{k^{\epsilon_{k}}}\lambda_{k}y)\\
&\leq k^{\epsilon_{k}} \cdot (\frac{1}{2k})^{\epsilon_{k}} \cdot dist^{\epsilon_{k}}(x_{k},\partial\Omega_{k})\cdot  \sup_{\Omega_{k}}u_{k}\\
&\leq (\frac{1}{2})^{\epsilon_{k}} \cdot  \sup_{\Omega_{k}}u_{k}\cdot diam^{\epsilon_{k}}(\Omega_{k})\\
&=(\frac{1}{2})^{\epsilon_{k}}\leq 1, \\
|\nabla v_{k}(y)|&=\lambda_{k}^{1+\epsilon_{k}} \cdot |\nabla u_{k}(x_{k}+\frac{1}{k^{\epsilon_{k}}}\lambda_{k}y)|\\
&=M_{k}(x_{k})^{-(1+\epsilon_{k})}\cdot M_{k}(x_{k}+\frac{1}{k^{\epsilon_{k}}}\lambda_{k}y)^{1+\epsilon_{k}}\leq 4, \qquad  \forall y:|y|\leq k\\
|\nabla v_{k}(0)|&=1,
\end{align*}
and
\begin{equation}\label{p-vk}
\Delta_{p}v_{k}(y)=div(|\nabla v_{k}|^{p-2}\cdot\nabla v_{k})=(\lambda_{k}^{1+\epsilon_{k}})^{p-1}\cdot\Delta_{p}u_{k}=0,
\end{equation}
that is, $v_{k}$ solves
$$\Delta_{p}v_{k}=0,\qquad x\in B_{k}(0).$$
By using $C^{\alpha}$ estimates, we deduce that
there exist $ \beta\in(0,1)$ and constant $C$ (independent of $k$) such that
$$\parallel v_{k}\parallel_{C_{loc}^{1+\beta}}\leq C.$$
Therefore there exist a subsequence of $ { v_{k} } $, still denoted by $ { v_{k} }$, such that
\begin{equation}\label{jixian}
v_{k}\rightarrow v \qquad in\ C_{loc}^{1}(\mathbb{R}^{N}),
\end{equation}
and at point  $0$, $v$ satisfies
\begin{equation}\label{dv0}
|\nabla v(0)|=1.
\end{equation}
By \eqref{p-vk} it follows that $v$ satisfies
$$\Delta_{p}v=0, \qquad x\in \mathbb{R}^{N}.$$
By Liouville theorem (Lemma \ref{Liouville0}) we deduce that
$$v=const, \qquad x\in \mathbb{R}^{N}.$$
So $|\nabla v|\equiv0$  in $\mathbb{R}^{N}$, this contradicts with \eqref{dv0}. Therefore \eqref{eqn2} holds and we have proved Theorem \ref{thm1}.

\end{proof}

\begin{remark}
In the above proof, we used an approximation method to get the estimate by first relaxing the inequality through introducing $\epsilon$ in \eqref{eqn1} and then taking limit, which is a critical step since we found that it was hard to apply blow up analysis for the estimate \eqref{eqn1-1} directly.  

\end{remark}

\subsection{The proof of Theorem \ref{thm2}}

\begin{proof}[Proof of Lemma \ref{stable}]
$\forall\varphi\in C_{c}^{1}(\mathbb{R}^{N}\setminus K)$, let $V=supp \varphi\subset\mathbb{R}^{N}\setminus K$,  $V\subset\Omega_{k}$ as $k$ large enough. We first prove that $u$ satisfies \eqref{weaks}. In fact, as $k$  large enough,
\begin{align*}
&|\int_{V}|\nabla u_{k}|^{p-2}(\nabla u_{k},\nabla\varphi) dx-\int_{V}|\nabla u|^{p-2}(\nabla u,\nabla\varphi) dx|\\
&\leq\int_{V}(|\nabla u_{k}|-|\nabla u|)^{p-2}(\nabla u_{k},\nabla\varphi) dx+ \int_{V}|\nabla u|^{p-2}(\nabla u_{k}-\nabla u,\nabla\varphi) dx  
\rightarrow0, \qquad (k\rightarrow\infty).
\end{align*}

We next prove
\begin{equation}
L_{u_{k}}(\varphi,\varphi)\rightarrow L_{u}(\varphi,\varphi)  \qquad (k\rightarrow\infty).
\end{equation}
In fact, as $k$  large enough,
\begin{align*}
&|\int_{V}|\nabla u_{k}|^{p-2}|\nabla\varphi|^{2} dx-\int_{V}|\nabla u|^{p-2}|\nabla\varphi|^{2} dx|\\
&\leq \displaystyle\max_{V}(|\nabla u_{k}|^{p-2}|-|\nabla u|^{p-2}|)\int_{V}|\nabla\varphi|^{2} dx \rightarrow0, \qquad (k\rightarrow\infty).
\end{align*}
Similarly,
$$
\int_{V}|u_{k}|^{\lambda-1}\varphi^{2}dx\rightarrow\int_{V}|u|^{\lambda-1}\varphi^{2}dx, \qquad (k\rightarrow\infty).
$$
Moreover,
\begin{align*}
&|\int_{V}|\nabla u_{k}|^{p-4}|(\nabla u_{k},\nabla\varphi)|^{2} dx-\int_{V}|\nabla u|^{p-4}|(\nabla u,\nabla\varphi)|^{2} dx|\\
&\leq\int_{V}(|\nabla u_{k}|-|\nabla u|)^{p-4}|(\nabla u_{k},\nabla\varphi)|^{2} dx+ \int_{V}|\nabla u|^{p-4}(\nabla u_{k}+\nabla u,\nabla\varphi)(\nabla u_{k}-\nabla u,\nabla\varphi) dx\\
&\rightarrow0\qquad (k\rightarrow\infty).
\end{align*}
Therefore,
\begin{equation}
L_{u_{k}}(\varphi,\varphi)\rightarrow L_{u}(\varphi,\varphi)\geq0 \qquad (k\rightarrow\infty).
\end{equation}
So $u$ is a weak solution to \eqref{bianhao} and is stable outside of $K$ according to the Definition \ref{Defstable}.
\end{proof}

\begin{proof}[Proof of Theorem \ref{thm2}]
Let $\alpha=\frac{p}{\lambda+1-p}$. Assume that estimate \eqref{guji2} fails, then there exist sequences $\Omega_{k},u_{k},y_{k}\in\Omega_{k}$ $(k=1,2,\cdots)$, such that each $u_{k}$ is a stable weak solution of \eqref{bianhao} on $\Omega_{k}$ and functions
$$M_{k}:=|u_{k}|^{\frac{1}{\alpha}}+|\nabla u_{k}|^{\frac{1}{(\alpha+1)}}, \qquad k=1,2,\cdots,$$
satisfy $$M_{k}(y_{k})>2k dist^{-1}(y_{k},\partial\Omega_{k}).$$
By Lemma \ref{double lemma} it follows that $\exists x_{k}\in\Omega_{k}$, such that
$$M_{k}(x_{k})>2k dist^{-1}(x_{k},\partial\Omega_{k}), $$
$$M_{k}(z)\leq2M_{k}(x_{k}), \quad\forall z:|z-x_{k}|\leq k M_{k}^{-1}(x_{k}).$$
Let $\lambda_{k}:=M_{k}^{-1}(x_{k})$, and rescale $u_{k}$ by setting
$$v_{k}(y):=\lambda_{k}^{\alpha}u_{k}(x_{k}+\lambda_{k}y),\qquad \forall y:|y|\leq k. $$
Note that $(p-1)(\alpha+1)+1=\lambda\alpha$. It's easy to verify that $v_{k}$ is a stable weak solution of
\begin{equation}\label{vkbianhao}
-\Delta_{p}v_{k}=|v_{k}|^{\lambda-1}v_{k}, \qquad y\in B_{k}(0)
\end{equation}
and
\begin{equation}\label{vkbianhao0}
[|v_{k}|^{\frac{1}{\alpha}}+|\nabla v_{k}|^{\frac{1}{(\alpha+1)}}](0)=1,\qquad \qquad
\end{equation}

\begin{equation}\label{vkbianhao2}
[|v_{k}|^{\frac{1}{\alpha}}+|\nabla v_{k}|^{\frac{1}{(\alpha+1)}}](y)\leq2,\qquad|y|\leq k.
\end{equation}
By using $C_{loc}^{1,\beta}$ estimates of p-Laplacian equation, we know $\exists \beta\in(0,1)$ and $C$  (independent of $k$$)$ such that
$$\parallel v_{k}\parallel_{C_{loc}^{1+\beta}}\leq C, \qquad \forall k=1,2,\cdots.$$
By Arzela-Ascolli Theorem, it follows that there exist a subsequence of ${v_{k}}$, still denoted by ${v_{k}}$, such that

\begin{equation}\label{jixian2}
v_{k}\rightarrow v \qquad in\ C_{loc}^{1}(\mathbb{R}^{N}).
\end{equation}
and by \eqref{vkbianhao} we deduce that $v$ satisfies
\begin{equation}\label{v22}
\Delta_{p}v=|v|^{\lambda-1}v, \qquad x\in \mathbb{R}^{N}.
\end{equation}
By Lemma \ref{stable}, $v$ is a stable weak solution of $\eqref{v22}$. By Lemma \ref{Liouville1}
we know that
$$v=0, \qquad x\in \mathbb{R}^{N}.$$
But by \eqref{vkbianhao0} it follows that
\begin{equation}
[|v|^{\frac{1}{\alpha}}+|\nabla v|^{\frac{1}{(\alpha+1)}}](0)=1.
\end{equation}
Contradiction is derived. As such we have proved Theorem \ref{thm2}.
\end{proof}

\begin{proof}[Proof of Theorem \ref{thm3}]
By applying the same procedure in  proof of Theorem \ref{thm2}, it is easy to deduce the conclusion by Lemma \ref{Liouville2} and Lemma \ref{stable}.
\end{proof}

\begin{proof}[Proofs of Corollary \ref{pf1} and Corollary \ref{pf2}]
Similar to the proofs of Theorem \ref{thm2} and Theorem \ref{thm3}, the difference lies in that
$$M_{k}(y_{k})>2k(1+dist^{-1}(y_{k},\partial\Omega_{k}))>2k dist^{-1}(y_{k},\partial\Omega_{k}), $$
and $$\lambda_{k}\rightarrow0 \quad (k\rightarrow\infty). $$\\
$v_{k}$ solves
\begin{equation}\label{evkf}
-\Delta_{p}v_{k}(y)=f_{k}(v_{k}(y)):=\lambda_{k}^{(\alpha+1)(p-1)+1}f(\lambda_{k}^{-\alpha v_{k}(y)}), \qquad|y|\leq k.
\end{equation}
By using $C^{\alpha}$ estimates, we deduce that there exist $ \beta\in(0,1)$ and constant $C ($independent of $k$$)$ such that
$$\parallel v_{k}\parallel_{C_{loc}^{1+\beta}}\leq C, \qquad \forall k=1,2,\cdots.$$
Therefore there exist a subsequence of ${v_{k}}$, still denoted by ${v_{k}}$, such that
\begin{equation}\label{jixian2}
v_{k}\rightarrow v \qquad in\ C_{loc}^{1}(\mathbb{R}^{N}),
\end{equation}
and by \eqref{evkf} we deduce that $v$ satisfies \eqref{vkbianhao0}-\eqref{vkbianhao2} and solves
\begin{equation}
\Delta_{p}v=l\cdot|v|^{\lambda-1}v, \qquad x\in \mathbb{R}^{N}.
\end{equation}
By Liouville theorems $($Lemma \ref{Liouville1} and Lemma \ref{Liouville2}$)$ we know
$$v=0, \qquad x\in \mathbb{R}^{N}.$$
The contradiction is also derived.
\end{proof}

\begin{proof}[Proofs of Theorem \ref{gj}]
Assume that \eqref{higherestimate} fails, then, there exist sequences $\Omega_{k},u_{k},y_{k}\in\Omega_{k}$ $(k=1,2,\cdots)$,
such that each $u_{k}$ is a smooth solution of $-\Delta u=u^{\lambda} $ on $\Omega_{k}$, and functions
\begin{equation}
M_{k}:=\sum_{r=0}^{s}|\nabla^{r} u_{k}|^{\frac{1}{\alpha+r}}, \qquad k=1,2,\cdots,
\end{equation}
satisfy $$M_{k}(y_{k})>2k dist^{-1}(y_{k},\partial\Omega_{k}),$$\\
where $\alpha=\frac{2}{\lambda-1}$.\\
By Lemma \ref{double lemma} it follows that there exists $ x_{k}\in\Omega_{k}$,  such that
$$M_{k}(x_{k})>2k dist^{-1}(x_{k},\partial\Omega_{k}), $$
$$M_{k}(z)\leq2M_{k}(x_{k}), \quad\forall z:|z-x_{k}|\leq k M_{k}^{-1}(x_{k}).$$
Let $\lambda_{k}:=M_{k}^{-1}(x_{k})$, and rescale $u_{k}$ by setting
$$v_{k}(y):=\lambda_{k}^{\alpha}u_{k}(x_{k}+\lambda_{k}y),\qquad \forall y:|y|\leq k. $$
Note that as $|y|\leq k$ we have
$$|\lambda_{k} y|\leq\frac{1}{2k}dist(x_{k},\partial\Omega_{k})\cdot k=\frac{1}{2}dist(x_{k},\partial\Omega_{k}). $$
This implies that $$x_{k}+\lambda_{k}y\in\Omega_{k} , $$
Therefore $v_{k}$ is well defined. We further deduce that
\begin{align*}\label{mvk2}
-\Delta v_{k}(y)&=-\lambda_{k}^{\alpha}\cdot \Delta_{y}u_{k}(x_{k}+\lambda_{k}y)\\
&=-\lambda_{k}^{\alpha+2}\Delta u_{k}\\
&=\lambda_{k}^{\alpha+2} u_{k}^{\lambda}=(\lambda_{k}^{\alpha} u_{k})^{\lambda}\\
&=v_{k}^{\lambda}(y).
\end{align*}
So $v_{k}$ solves
$$-\Delta v_{k}=v_{k}^{\lambda},\qquad y\in B_{k}(0),$$
and
\begin{align*}
\sum_{r=0}^{s}|\nabla^{r}v_{k}(y)|^{\frac{1}{\alpha+r}}&=\sum_{r=0}^{s}|\lambda_{k}^{\alpha+r}\nabla^{r}u_{k}|^{\frac{1}{\alpha+r}}\\
&=\lambda_{k}M_{k}(x_{k}+\lambda_{k}y)\\
&\leq2,  \qquad\qquad\qquad |y|\leq k;
\end{align*}
\begin{equation}\label{2dvk0}
\sum_{r=0}^{s}|\nabla^{r}v_{k}|^{\frac{1}{\alpha+r}}(0)=\lambda_{k}M_{k}(x_{k})=1.
\end{equation}
By using $C^{\alpha}$ estimates, we deduce that
there exist $\beta\in(0,1)$ and constant $C$ (independent of $k$) such that
$$\parallel v_{k}\parallel_{C_{loc}^{s+\beta}}\leq C.$$
Therefore there exist a subsequence of ${v_{k}}$, still denoted by ${v_{k}}$, such that
\begin{equation}\label{jixian}
v_{k}\rightarrow v \qquad in \ C_{loc}^{s}(\mathbb{R}^{N}),
\end{equation}
and $v$ solves
\begin{equation}
-\Delta v=v^{\lambda},  \qquad x\in \mathbb{R}^{N}.
\end{equation}
By Lemma \ref{Liouville0} we deduce
$$v=const, \qquad x\in \mathbb{R}^{N}.$$
This contradicts with \eqref{2dvk0}, thus we have proved Theorem \ref{gj}.
\end{proof}

\noindent{\bf{\large Acknowledgements.}}
This study was supported by grants from the National Natural Science Foundation of China (11871070), the Guangdong Basic and Applied Basic Research Foundation (2020B151502120), the Fundamental Research Funds for the Central Universities (20ykzd20).

\bibliographystyle{unsrt}

\bibliography{references}

\end{document}